\PassOptionsToPackage{
    colorlinks,
    linkcolor={red!50!black},
    citecolor={blue!50!black},
    urlcolor={blue!80!black}}{hyperref}
\documentclass{article}
\usepackage{amsmath,amsfonts,amsthm,amssymb,mathtools,url}
\usepackage{natbib}
\usepackage[nottoc,notlof,notlot]{tocbibind}
\usepackage[utf8]{inputenc}
\usepackage[english]{babel}
\usepackage{algorithm2e}
\usepackage{pgfplots}
\usepackage{hyperref}
\usepackage{xcolor}
\usepackage{tikz}
\usepackage{subfig}
\usepackage{enumitem}

\newcommand{\R}{\mathbb{R}}

\newcommand{\C}{\mathbb{C}}

\newcommand{\N}{\mathbb{N}}

\newcommand{\scal}[2]{\left\langle #1,#2 \right\rangle}

\renewcommand{\Im}{\mathrm{Im}\,}
\renewcommand{\Re}{\mathrm{Re}\,}

\DeclareMathOperator{\Card}{Card}

\DeclareMathOperator{\phase}{phase}
\DeclareMathOperator{\Range}{Range}

\newtheorem{thm}{Theorem}[section]
\newtheorem{lem}[thm]{Lemma}
\newtheorem*{lem*}{Lemma}
\newtheorem{prop}[thm]{Proposition}
\newtheorem{cor}[thm]{Corollary}
\newtheorem{conj}[thm]{Conjecture}
\newtheorem*{conj*}{Conjecture}
\newtheorem*{thm*}{Theorem}
\newtheorem*{prop*}{Proposition}

\title{Phase retrieval with random Gaussian sensing vectors by alternating projections}
\author{Irène Waldspurger
  \thanks{MIT Institute for Data, Systems and Society; e-mail address: \texttt{waldspur@math.mit.edu}.}
  }
\date{}

\begin{document}
\maketitle

\begin{abstract}
We consider a phase retrieval problem, where we want to reconstruct a $n$-dimensional vector from its phaseless scalar products with $m$ sensing vectors, independently sampled from complex normal distributions. We show that, with a suitable initalization procedure, the classical algorithm of alternating projections succeeds with high probability when $m\geq Cn$, for some $C>0$. We conjecture that this result is still true when no special initialization procedure is used, and present numerical experiments that support this conjecture.
\end{abstract}

\section{Introduction}

A \emph{phase retrieval} problem consists in recovering an unknown vector $x_0\in\C^n$ from $m$ phaseless linear measurements, of the form
\begin{equation*}
b_k=|\scal{a_k}{x_0}|,\quad\quad k=1,\dots,m.
\end{equation*}
Such problems naturally appear in various applications, notably in optical imaging \citep{schechtman}. A lot of efforts have thus been made to design efficient algorithms that could numerically solve these problems.

The oldest reconstruction algorithms \citep{gerchberg, fienup} were iterative: they started from a random initial guess of $x_0$, and tried to iteratively refine it by various heuristics. These methods sometimes succeed, but can also fail to converge towards the correct solution: they may get stuck in \emph{stagnation points}, whose existence is due to the non-convexity of the problem. When they convergence and when they do not is not clearly understood.

To overcome convergence problems, convexification methods have been introduced \citep{chai,candes2}. Their principle is to lift the non-convex problem to a matricial space, where it can be approximated by a convex problem. These methods provably reconstruct the unknown vector $x_0$ with high probability if the sensing vectors $a_k$ are ``random enough'' \citep{candes_li,candes_li2,gross}. Numerical experiments show that they also perform well on more structured, non-random phase retrieval problems \citep{sun_smith,maxcut}.


Unfortunately, this good precision comes at a high computational cost: optimizing an $n\times n$ matrix is much slower than directly reconstructing a $n$-dimensional vector. More recently, a new family of algorithms has thus been developed, which enjoy similar theoretical guarantees as convexified methods, but have a much smaller computational complexity. The algorithms of this family rely on the following two-step scheme:
\begin{enumerate}[label=(\arabic*)]
\item an initialization step, that returns a point close to the solution;\label{item:step1}
\item a gradient descent (with possible additional refinements) over a well-chosen non-convex cost function.\label{item:step2}
\end{enumerate}
The intuitive reason why this scheme works is that the cost function, although globally non-convex, enjoys some good geometrical property in a neighborhood of the solution (like convexity or a weak form of it \citep{white}). When the initial point belongs to this neighborhood, the gradient descent then converges to the correct solution.

A preliminary form of this scheme appeared in \citep{netrapalli}, with an alternating minimization in step \ref{item:step2} instead of a gradient descent. \citep{candes_wirtinger} then proved, for a specific cost function, that this two-step scheme was able to exactly reconstruct the unknown $x_0$ with high probability, in the regime $m=O(n\log n)$, if the sensing vectors were independent and Gaussian. In \citep{candes_wirtinger2,kolte}, the same result was shown in the regime $m=O(n)$ for a slightly different cost function, with additional truncation steps. In \citep{zhang,wang}, it was extended to a different, non-smooth, cost function.

These new methods enjoy much stronger theoretical guarantees than ``traditional'' algorithms. However, it is not clear whether they really perform better in applications, or whether they actually behave similarly, and are simply easier to theoretically study. Traditional algorithms are well-known, simple and very easy to implement; understanding how they compare to more modern methods is of much value for applications.

In this article, we take a first step towards this goal, by considering the very classical alternating projections algorithm, introduced by \citep{gerchberg}, arguably the most simple and widely used method for phase retrieval. We show that, in the setting where sensing vectors are independent and Gaussian, it performs as well as gradient descent over a suitable cost function: it converges linearly to the true solution with high probability, provided that it is correctly initialized.

\begin{thm*}[See Corollary \ref{cor:global_convergence}]
There exist absolute constants $C_1,C_2,M>0$, $\delta\in]0;1[$ such that, if $m\geq Mn$ and the sensing vectors are independently chosen according to complex normal distributions, the sequence of iterates $(z_t)_{t\in \N}$ produced by the alternating projections method satisfies
\begin{equation*}
\forall t\in\N^*,\quad\quad \inf_{\phi\in\R}||e^{i\phi}x_0-z_t||\leq \delta^t||x_0||,
\end{equation*}
with probability at least
\begin{equation*}
1-C_1\exp(-C_2m),
\end{equation*}
provided that alternating projections are correctly initialized, for example with the method described in \citep{candes_wirtinger2}.
\end{thm*}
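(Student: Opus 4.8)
The plan is to derive the corollary from two independent ingredients: a local, one-step contraction estimate for the alternating-projection map (the substantive Theorem the corollary refers to), and the initialization guarantee of \citep{candes_wirtinger2}. Write the measurement operator $A:\C^n\to\C^m$, $(Az)_k=\scal{a_k}{z}$, its pseudo-inverse $A^\dagger=(A^*A)^{-1}A^*$, and the alternating-projection map
\begin{equation*}
F(z)=A^\dagger\big(\diag(b)\,\phase(Az)\big),\qquad \phase(Az)_k=\phase(\scal{a_k}{z}).
\end{equation*}
Since $b_k=|\scal{a_k}{x_0}|$ one has $\diag(b)\phase(Ax_0)=Ax_0$, so $F(x_0)=A^\dagger Ax_0=x_0$ and $x_0$ is a fixed point; by homogeneity $e^{i\phi}x_0$ is fixed for every $\phi$. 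Everything is measured modulo this global phase, so after replacing $z$ by its phase-optimal representative I may assume $\inf_\phi\|e^{i\phi}x_0-z\|=\|z-x_0\|=:\|h\|$, in which case the first-order optimality condition $\Re\scal{ix_0}{h}=0$ forces $h$ to be real-orthogonal to $ix_0$.

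The core step is to show that, on a high-probability event $E_1$ and for all $z$ with $\|h\|\le\varepsilon_0\|x_0\|$, one has $\inf_\phi\|e^{i\phi}x_0-F(z)\|\le\delta\|h\|$ with $\delta\in\,]0;1[$. I would obtain this by linearizing $F$ at $x_0$. Writing $u_k=\scal{a_k}{x_0}$ and $p_k=\phase(u_k)$, a direct computation of the real-Fréchet derivative of $\phase$ gives
\begin{equation*}
DF(x_0)[h]=A^\dagger\Big[(Ah)_k-\Re\big(\overline{p_k}(Ah)_k\big)\,p_k\Big]_k,
\end{equation*}
i.e. each coordinate of $Ah$ is projected onto the tangent line $\R\cdot(ip_k)$ to the unit circle at $p_k$. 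Plugging $h=ix_0$ gives $(Ah)_k=iu_k$ and $\Re(\overline{p_k}\,iu_k)=0$, so $DF(x_0)[ix_0]=A^\dagger(iAx_0)=ix_0$: the phase direction is neutral, exactly the direction annihilated by the $\inf_\phi$. Contraction therefore only has to be proved on the real-orthogonal complement of $ix_0$.

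On that complement I would control $DF(x_0)$ by separating expectation from fluctuation. Using $A^*A\approx m\,\Id$ and the Gaussian law of the $a_k$, the operator $\tfrac1m A^*\Phi A$ — with $\Phi$ the diagonal real-linear projector $v_k\mapsto v_k-\Re(\overline{p_k}v_k)p_k$ — has an expectation computable by Gaussian integration whose restriction to $(ix_0)^\perp$ has norm some $\delta_0<1$; a matrix-concentration argument over an $\varepsilon$-net of the sphere, built on sub-exponential tail bounds for the quadratic forms $\scal{a_k}{\cdot}$, shows that for $m\ge Mn$ the random operator deviates from its mean by less than $\delta-\delta_0$ on an event of probability $1-C_1'\exp(-C_2'm)$. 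The delicate point is that $\phase$ is non-smooth at $0$, so the first-order expansion is valid only away from the coordinates where $|u_k|$ is small; I would treat these few bad coordinates by a truncation argument — they are rare because each $u_k$ is a non-degenerate complex Gaussian — and bound the resulting nonlinear remainder by $C\|h\|^2/\|x_0\|$ uniformly over the neighborhood. Choosing $\varepsilon_0$ small enough absorbs this quadratic term into the gap $\delta-\delta_0$ and yields the one-step contraction. This \emph{uniform} remainder control over the whole basin, rather than the mean computation or the concentration, is the main obstacle.

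It remains to assemble the pieces. The truncated spectral initialization of \citep{candes_wirtinger2} produces $z_0$ with $\inf_\phi\|e^{i\phi}x_0-z_0\|\le\varepsilon_0\|x_0\|$ on an event $E_2$ of probability $1-C_1''\exp(-C_2''m)$ as soon as $m\ge M''n$. On $E_1\cap E_2$ the basin $\{\,\inf_\phi\|e^{i\phi}x_0-z\|\le\varepsilon_0\|x_0\|\,\}$ is forward-invariant (because $\delta<1$), so an immediate induction gives $\inf_\phi\|e^{i\phi}x_0-z_t\|\le\delta^t\,\inf_\phi\|e^{i\phi}x_0-z_0\|\le\delta^t\|x_0\|$ for every $t\ge1$. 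A union bound over $E_1$ and $E_2$, after adjusting $M$, $C_1$, $C_2$, gives the stated failure probability $C_1\exp(-C_2m)$ and completes the proof.
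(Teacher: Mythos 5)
Your overall architecture coincides with the paper's: a uniform one-step contraction for $F(z)=A^\dag(b\odot\phase(Az))$ on an $\epsilon\|x_0\|$-neighborhood (Theorem \ref{thm:local_convergence}), the truncated spectral initialization (Theorem \ref{thm:guarantee_init}), and induction on $t$ (Corollary \ref{cor:global_convergence}); your assembly step is the paper's, up to the positive scalar $\lambda$ in the initialization guarantee, which is harmless since $\phase(A(\lambda z_0))=\phase(Az_0)$. Your linearization also identifies the right operator: $\|\Phi(Ah)\|=\|\Im((Ah)\odot\overline{\phase(Ax_0)})\|$ is exactly the quantity bounded by $\tfrac45\|\cdot\|$ in Lemma \ref{lem:second_term}. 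For that linear term, though, the paper does not use an expectation-plus-concentration argument: it rotates $x_0$ to the first basis vector, observes that $C=\mathrm{Diag}(\overline{\phase(A_1)})A_{2:n}$ is still i.i.d.\ Gaussian, and applies Proposition \ref{prop:davidson} to the real $m\times 2(n-1)$ matrix $\pmat{\Im C&\Re C}$, which produces the factor $\approx 1/\sqrt2<1$ in one stroke. Your route there is plausible, just heavier; it is not where the problem lies.

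The genuine gap is the remainder control, which you rightly call the main obstacle but then propose to settle with a claim that is false: there is no uniform bound of the form $C\|h\|^2/\|x_0\|$ on the nonlinear remainder, and shrinking $\varepsilon_0$ cannot restore it, because the offending perturbations have norm far below any fixed $\varepsilon_0\|x_0\|$. Concretely, let $i_0$ be the index minimizing $|a_i^*x_0|$, so that typically $|a_{i_0}^*x_0|\approx\|x_0\|/\sqrt m$, and take $h$ proportional to the projection of $a_{i_0}$ onto $x_0^\perp$, scaled so that $a_{i_0}^*h=i\,a_{i_0}^*x_0$; since $\|a_{i_0}\|\approx\sqrt n$, this costs only $\|h\|\approx\|x_0\|/\sqrt{nm}$. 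At coordinate $i_0$ the perturbation is as large as the unperturbed measurement, the first-order expansion of $\phase$ has a relative error of size $\Theta(1)$, and the residual in the measurement domain is $b_{i_0}|\phase(1+i)-1-i|\approx 0.41\,|a_{i_0}^*x_0|\approx 0.41\|x_0\|/\sqrt m$, whereas your claim permits only about $\sqrt m\,C\|h\|^2/\|x_0\|\approx C\|x_0\|/(n\sqrt m)$ there: a violation by a factor $\Theta(n)$, which survives the application of $A^\dag$ (that shrinks this coordinate-concentrated residual only by about $\sqrt n/m$, leaving a violation of order $n/\sqrt M$ when $m=Mn$). The root cause is that $\phase$ admits no uniform second-order expansion near $0$, and among $m$ Gaussian measurements some $|a_i^*x_0|$ are always as small as $\|x_0\|/\sqrt m$; perturbations aligned with those coordinates make the remainder \emph{linear}, not quadratic, in $\|h\|$.

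This is precisely why the paper abandons the smooth-function mechanism and replaces it with: (i) Lemma \ref{lem:diff_phase}, in which the ``second-order'' term is the indicator $2\cdot 1_{|z|\geq|z_0|/6}$ rather than a square; and (ii) Lemma \ref{lem:first_term}, which shows $\|\,|Ax_0|\odot 1_{|v|\geq|Ax_0|}\|\leq\eta\|v\|$ uniformly over $v\in\Range(A)$ with $\|v\|<\gamma\|Ax_0\|$ --- a bound linear in $\|v\|$ with an arbitrarily small constant --- proved by a union bound over the candidate bad sets $S$ (Lemma \ref{lem:S_geq_bm}: any set of proportional size carries a definite fraction of $\|Ax_0\|$, so the bad set must be small; Lemma \ref{lem:S_leq_bm}: small sets cannot carry much of the energy of any vector of $\Range(A)$). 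Some such uniform, combinatorial control of the bad coordinates must replace your quadratic bound; once it does, the rest of your argument goes through and matches the paper's proof.
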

Several authors have already tried to establish properties of this kind, but, compared to ours, their results were significantly suboptimal in various respects. Using transversality arguments, \citep{noll,chen_fannjiang} have shown the local convergence of alternating projections for relatively general families of sensing vectors. Unfortunately, transversality arguments give no control on the convergence radius of the algorithm, which can be extremely small. Lower bounding the radius requires using the statistical properties of the sensing vectors. This was first attempted in \citep{netrapalli}. The result obtained by these authors was very similar to ours, but required resampling the sensing vectors at each step of the algorithm, an operation that is almost never done in practice. For a non resampled version, a preliminary result was given in \citep{soltanolkotabi}, but this result did not capture the correct scaling of the convergence radius as a function of $m$ and $n$. As a consequence, it only established global convergence to the correct solution for a suboptimal number of measurements ($m=O(n\log^2n)$), and with a complex initialization procedure.

To have theory and practice exactly coincide, the role of the initialization procedure should also be examined: in applications, alternating projections are often used with a random initialization, and not with a carefully chosen one.
Our numerical experiments indicate that removing the careful initialization does not significantly alter the convergence of the algorithm (still in the setting of Gaussian independent sensing vectors). This fact is related to the observations of \citep{sun_qu_wright}, who proved that, at least in the regime $m\geq O(n\log^3n)$ and for a specific cost function, the initialization part of the two-step scheme is not necessary in order for the algorithm to converge. In the context of alternating projections, we were however not able to prove a similar result\footnote{The proof technique used by \citep{sun_qu_wright} collapses when gradient descent is replaced with alternating projections, as described in an extended version of this article \citep{gerchberg_saxton_long}.}, and leave it as a conjecture.
\begin{conj*}[See Conjecture \ref{conj:convergence_random}]
Let any $\epsilon>0$ be fixed. When $m\geq Cn$, for $C>0$ large enough, alternating projections, starting from a random initialization (chosen according to a rotationally invariant distribution), converge to the true solution with probability at least $1-\epsilon$.
\end{conj*}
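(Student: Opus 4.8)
The plan is to follow the two-phase template that underlies the proven Corollary: a \emph{search phase}, in which the iterate climbs from a poorly correlated random start into the local basin of attraction, followed by a \emph{convergence phase} governed by Corollary~\ref{cor:global_convergence}. Writing the alternating projections map as
$$z_{t+1}=\mathcal A^\dagger\big(b\odot\phase(\mathcal A z_t)\big),\qquad (\mathcal A z)_k=\scal{a_k}{z},\ b_k=|\scal{a_k}{x_0}|,$$
the entire difficulty is concentrated in the search phase, so I would aim to show that after $T_0=O(\log n)$ iterations the iterate $z_{T_0}$ enters, with probability at least $1-\epsilon$, the region where the local result applies, and then simply invoke the Corollary.

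To understand the search phase I would first analyze the \emph{expected} iteration. Since $\mathcal A^*\mathcal A\approx m\,\Id$ with high probability, one has $\mathcal A^\dagger\approx\tfrac1m\mathcal A^*$, and the one-step map is close to its Gaussian average $\bar F(z)=\E\big[a\,|\scal{a}{x_0}|\,\phase(\scal{a}{z})\big]$. Rotational invariance of the complex normal distribution forces $\bar F(z)\in\Vect{x_0,z}$, so the dynamics reduce to a planar system in the two quantities $\|z\|$ and the correlation $\rho=|\scal{x_0}{z}|/(\|x_0\|\,\|z\|)$, whose coefficients are explicit Gaussian integrals. I would prove that for every $\rho\in\,]0,1[$ the averaged map strictly increases the correlation, $\rho_{t+1}>\rho_t$, so that $\rho=1$ (recovery of $x_0$ up to phase and scale) is the unique attracting fixed point of the averaged flow, while $\rho=0$ is a repelling one; the symmetry $x_0\mapsto-x_0$ shows directly that $\bar F(z)\parallel z$ whenever $z\perp x_0$.

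The second ingredient is a \emph{uniform} deviation bound that lets the true, non-resampled trajectory shadow the averaged flow. Because $z_t$ depends on the very vectors $a_k$ used at step $t$, I cannot invoke fresh-randomness concentration as in the resampled analysis; instead I would prove that, with probability at least $1-C_1\exp(-C_2 m)$, one has $\sup_{z}\|F(z)-\bar F(z)\|$ small, uniformly over the sphere, via an $\epsilon$-net argument. The non-smoothness of $\phase$ near $\scal{a_k}{z}=0$ obstructs a naive Lipschitz bound, so the terms with $|\scal{a_k}{z}|$ small must be isolated and controlled separately, using that $\Card\{k:|\scal{a_k}{z}|\leq\tau\}$ is uniformly small for $\tau$ small when $m\geq Cn$.

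The main obstacle — and the reason the statement remains a conjecture — is the behaviour of the search phase \emph{near} $\rho=0$. A rotationally invariant random initialization is nearly orthogonal to $x_0$, with $\rho_0=O(1/\sqrt n)$, so the trajectory starts close to the repelling fixed point $\rho=0$, which is a saddle of the associated Gerchberg--Saxton cost. For a smooth cost, a strict-saddle / negative-curvature argument (as in \citep{sun_qu_wright}) would guarantee escape; but the $\phase$ operator is non-differentiable exactly on the set of iterates with a vanishing coordinate, and this non-smoothness destroys the curvature estimates, as noted in the footnote. Making the escape rigorous — for instance by showing that the empirical correlation still grows geometrically from $\rho_0\sim1/\sqrt n$ despite the $o(1)$-measure of near-singular terms, uniformly over the $O(\log n)$ steps needed to reach the local basin — is the crux I expect to be hardest, and is where I would concentrate the effort.
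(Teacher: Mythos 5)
You should first be clear about the status of this statement: it is precisely Conjecture \ref{conj:convergence_random}, which the paper deliberately leaves \emph{unproven} (the text says the authors ``were not able to prove a similar result, and leave it as a conjecture''), so there is no proof in the paper to compare yours against, and your proposal --- honestly, and to its credit --- is also not a proof but a program with an acknowledged hole. That hole is the genuine gap, and it is worth seeing concretely why the two ingredients you propose cannot close it. With a fixed, non-resampled Gaussian matrix and $m\geq Cn$, the best uniform deviation bound available is of the form $\sup_{\|z\|=1}\|F(z)-\bar F(z)\|\leq c(C)\|x_0\|$, where $c(C)$ is small only in the sense of being a constant decreasing in $C$; it does not decay with $n$ (already $\||\frac1m A^*A-\Id\||$ is of order $\sqrt{n/m}=1/\sqrt C$, and an $\epsilon$-net over the sphere costs $e^{cn}$ points, which exactly consumes the $e^{-cm}$ concentration). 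Meanwhile, by the very symmetry argument you invoke, the drift of the averaged planar system vanishes at $\rho=0$, so near the repelling fixed point it is at best of order $\rho$ per step, and a rotationally invariant initialization has $\rho_0=O(1/\sqrt n)$. Hence for large $n$ the constant-order fluctuation swamps the $O(1/\sqrt n)$ drift: a worst-case, uniform-in-$z$ shadowing argument is structurally blind to signal at scale $1/\sqrt n$, and no geometric growth of $\rho_t$ over your $O(\log n)$ search-phase steps can be extracted from it. (A minor separate point: once inside the basin you should invoke Theorem \ref{thm:local_convergence}, the local contraction, rather than Corollary \ref{cor:global_convergence}, which bundles in the truncated spectral initialization.)

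There is also a structural reason your program, even executed perfectly, would prove too much. The experiments in Section \ref{s:numerical} indicate that in the regime $m=O(n)$ there exist \emph{attractive} stagnation points, i.e.\ initializations from which alternating projections genuinely fail; the conjecture is only that such bad initializations occupy measure at most $\epsilon$. Any correct proof must therefore be probabilistic over the initialization --- it must bound the volume of the union of attraction basins of stagnation points --- whereas the combination of ``the averaged map strictly increases $\rho$ for every $\rho\in\,]0;1[$'' with a uniform deviation bound would, if it went through, yield convergence from \emph{every} initialization above some correlation threshold, contradicting the existence of attractive stagnation points unless they all hide below that threshold, which the argument cannot guarantee. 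This is exactly the difficulty the paper names (``evaluating in some way the size of the attraction basin of stagnation points''), and it is why the strict-saddle route of \citep{sun_qu_wright}, which you correctly note collapses here because $\phase$ is non-differentiable on the set where a coordinate of $Az$ vanishes, has no known substitute. In short: your proposal maps the territory accurately and locates the crux where the paper does, but it supplies no idea that bridges it, and its two main tools provably cannot suffice on their own.
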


The article is organized as follows. Section \ref{s:setup} precisely defines phase retrieval problems and the alternating projections algorithm. Section \ref{s:with_init} states and proves the main result: the global convergence of alternating projections, with proper initialization, for $m=O(n)$ independent Gaussian measurements. Section \ref{s:numerical} contains numerical experiments, and presents our conjecture about the non-necessity of the initialization procedure.

This article only considers the most simple setting, where sensing vectors are independent and Gaussian, and measurements are not noisy. We made this choice in order to keep the technical content simple, but we hope that our results extend to more realistic settings, and future work should examine this issue.

\subsection{Notations}

For any $z\in\C$, $|z|$ is the modulus of $z$. We extend this notation to vectors: if $z\in\C^k$ for some $k\in\N^*$, then $|z|$ is the vector of $(\R^+)^k$ such that
\begin{equation*}
|z|_i=|z_i|,\quad\quad\forall i=1,\dots,k.
\end{equation*}
For any $z\in\C$, we set $E_{\phase}(z)$ to be the following subset of $\C$:
\begin{equation*}
\begin{array}{rll}
E_{\phase}(z)&=\left\{\frac{z}{|z|}\right\}&\mbox{ if }z\in\C-\{0\};\\
&=\{e^{i\phi},\phi\in\R\}&\mbox{ if }z=0.
\end{array}
\end{equation*}
We extend this definition to vectors $z\in\C^k$:
\begin{equation*}
E_{\phase}(z)=\prod_{i=1}^k E_{\phase}(z_i).
\end{equation*}
For any $z\in\C$, we define $\phase(z)$ by
\begin{equation*}
\begin{array}{rll}
\phase(z)&=\frac{z}{|z|}&\mbox{ if }z\in\C-\{0\};\\
&=1&\mbox{ if }z=0,
\end{array}
\end{equation*}
and extend this definition to vectors $z\in\C^k$, as for the modulus.

We denote by $\odot$ the pointwise product of vectors: for all $a,b\in\C^k$, $(a\odot b)$ is the vector of $\C^k$ such that
\begin{equation*}
(a\odot b)_i=a_ib_i,\quad\quad\forall i=1,\dots,k.
\end{equation*}
We define the operator norm of any matrix $A\in\C^{n_1\times n_2}$ by
\begin{equation*}
|||A||| = \sup_{v\in \C^{n_2}, ||v||=1}||Av||.
\end{equation*}
We denote by $A^\dag$ its Moore-Penrose pseudo-inverse. We note that $AA^\dag$ is the orthogonal projection onto $\Range(A)$. 

\section{Problem setup\label{s:setup}}

\subsection{Phase retrieval problem}

Les $n,m$ be positive integers. The goal of a phase retrieval problem is to reconstruct an unknown vector $x_0\in \C^n$ from $m$ measurements with a specific form.

We assume $a_1,\dots,a_m\in\C^n$ are given; they are called the \textit{sensing vectors}. We define a matrix $A\in\C^{m\times n}$ by
\begin{equation*}
A=\begin{pmatrix}a_1^*\\\vdots\\a_m^*\end{pmatrix}.
\end{equation*}
This matrix is called the \textit{measurement matrix}. The associated \textit{phase retrieval} problem is:
\begin{equation}\label{eq:problem_statement}
\mbox{reconstruct }x_0\mbox{ from }b\overset{def}{=}|Ax_0|.
\end{equation}
As the modulus is invariant to multiplication by unitary complex numbers, we can never hope to reconstruct $x_0$ better than \textit{up to multiplication by a global phase}. So, instead of exactly reconstructing $x_0$, we want to reconstruct $x_1$ such that
\begin{equation*}
x_1 = e^{i\phi}x_0,\quad\quad \mbox{for some }\phi\in\R.
\end{equation*}

In all this article, we assume the sensing vectors to be independent realizations of centered Gaussian variables with identity covariance:
\begin{equation}\label{eq:def_A}
(a_{i})_j\sim\mathcal{N}\left(0,\frac{1}{2}\right)
+\mathcal{N}\left(0,\frac{1}{2}\right)i,\quad\quad
\forall 1\leq i\leq m,1\leq j\leq n.
\end{equation}
The measurement matrix is in particular independent from $x_0$.

\citep{balan} and \citep{conca} have proved that, for \textit{generic} measurement matrices $A$, Problem \eqref{eq:problem_statement} always has a unique solution, up to a global phase, provided that $m\geq 4n-4$. In particular, with our measurement model \eqref{eq:def_A}, the reconstruction is guaranteed to be unique, with probability $1$, when $m\geq 4n-4$.

\subsection{Alternating projections\label{ss:alternating_projections}}

The alternating projections method has been introduced for phase retrieval problems by \citep{gerchberg}. It focuses on the reconstruction of $Ax_0$; if $A$ is injective, this then allows to recover $x_0$.

To reconstruct $Ax_0$, it is enough to find $z\in\C^m$ in the intersection of the following two sets.
\begin{enumerate}[label={(\arabic*)}]
\item $z\in \{z'\in\C^m,|z'|=b\}$;
\item $z\in\Range(A)$.
\end{enumerate}
Indeed, when the solution to Problem \eqref{eq:problem_statement} is unique, $Ax_0$ is the only element of $\C^m$ that simultaneously satisfies these two conditions (up to a global phase).

A natural heuristic to find such a $z$ is to pick any initial guess $z_0$, then to alternatively project it on the two constraint sets. In this context, we call \textit{projection} on a closed set $E\subset\C^m$ a function $P:\C^m\to E$ such that, for any $x\in\C^m$,
\begin{equation*}
||x-P(x)||=\inf_{e\in E}||x-e||.
\end{equation*}
The two sets defining constraints (1) and (2) admit projections with simple analytical expressions, which leads to the following formulas:
\begin{subequations}\label{eq:gs_image}
\begin{align}
y'_k&= b \odot \phase(y_k);& \mbox{(Projection onto set (1))}\\
y_{k+1}&= (AA^\dag) y'_k.& \mbox{(Projection onto set (2))}
\end{align}
\end{subequations}
If, for each $k$, we define $z_k$ as the vector such that $y_k=Az_k$ \footnote{which exists and is unique, because $y_k$ belongs to $\Range(A)$ and $A$ is injective with probability $1$}, an equivalent form of these equations is:
\begin{equation*}
z_{k+1} = A^\dag(b\odot\phase(Az_k)).
\end{equation*}

The hope is that the sequence $(y_k)_{k\in\N}$ converges towards $Ax_0$. Unfortunately, it can get stuck in \textit{stagnation points}. The following proposition (proven in Appendix \ref{s:stagnation_points}) characterizes these stagnation points.
\begin{prop}\label{prop:stagnation_points}
For any $y_0$, the sequence $(y_k)_{k\in\N}$ is bounded. Any accumulation point $y_\infty$ of $(y_k)_{k\in\N}$ satisfies the following property:
\begin{equation*}
\exists u\in E_{\phase}(y_\infty),\quad\quad
(AA^\dag)(b\odot u)=y_\infty.
\end{equation*}
In particular, if $y_\infty$ has no zero entry,
\begin{equation*}
(AA^\dag)(b\odot \phase(y_\infty))=y_\infty.
\end{equation*}
\end{prop}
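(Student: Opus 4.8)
The plan is to combine the classical monotonicity of alternating projections with a compactness argument. Write $C_1=\{z'\in\C^m : |z'|=b\}$ and $C_2=\Range(A)$, so that $y'_k=P_1(y_k)$ and $y_{k+1}=P_2(y'_k)$ with $P_1(y)=b\odot\phase(y)$ and $P_2=AA^\dag$ the projections onto $C_1$ and $C_2$. Boundedness is immediate: every element of $C_1$ has norm $||b||$, and $P_2$ is an orthogonal projection, hence $1$-Lipschitz, so $||y_{k+1}||=||P_2(y'_k)||\leq ||y'_k||=||b||$ for all $k$, and $(y_k)_{k\in\N}$ is bounded.

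Next I would track the energy $d_k=||y_k-y'_k||$, which is the distance from $y_k$ to $C_1$. Since $y_{k+1}$ is the projection of $y'_k$ onto $C_2$ and $y_k\in C_2$ for $k\geq 1$, we have $||y'_k-y_{k+1}||\leq ||y'_k-y_k||=d_k$; and since $y'_{k+1}$ is the projection of $y_{k+1}$ onto $C_1$ while $y'_k\in C_1$, we have $d_{k+1}=||y_{k+1}-y'_{k+1}||\leq ||y_{k+1}-y'_k||$. Chaining these inequalities shows that $(d_k)_k$ is non-increasing; being nonnegative, it converges to some limit $d_\infty$, and the intermediate quantity $||y'_k-y_{k+1}||$, squeezed between $d_{k+1}$ and $d_k$, converges to $d_\infty$ as well.

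The crucial step is a Pythagorean identity. Because $C_2$ is a linear subspace and $y_{k+1}=P_2(y'_k)$, the residual $y'_k-y_{k+1}$ is orthogonal to $C_2$; as $y_{k+1}-y_k\in C_2$ for $k\geq 1$, this yields $||y'_k-y_k||^2=||y'_k-y_{k+1}||^2+||y_{k+1}-y_k||^2$. Since both $||y'_k-y_k||$ and $||y'_k-y_{k+1}||$ tend to $d_\infty$, I conclude that $||y_{k+1}-y_k||\to 0$. Now let $y_\infty$ be the limit of a subsequence $(y_{k_j})_j$. The points $y'_{k_j}$ lie in the compact set $C_1$, so after passing to a further subsequence I may assume $y'_{k_j}\to y'_\infty$ with $|y'_\infty|=b$. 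At coordinates $i$ with $(y_\infty)_i\neq 0$, continuity of $\phase$ forces $(y'_\infty)_i=b_i\phase((y_\infty)_i)$, while at coordinates with $(y_\infty)_i=0$ the entry $(y'_\infty)_i$ merely has modulus $b_i$; in every case $(y'_\infty)_i\in b_i E_{\phase}((y_\infty)_i)$, so $y'_\infty=b\odot u$ for some $u\in E_{\phase}(y_\infty)$. Continuity of $AA^\dag$ gives $y_{k_j+1}=(AA^\dag)y'_{k_j}\to (AA^\dag)(b\odot u)$, and since $||y_{k_j+1}-y_{k_j}||\to 0$ with $y_{k_j}\to y_\infty$, also $y_{k_j+1}\to y_\infty$; hence $(AA^\dag)(b\odot u)=y_\infty$. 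When $y_\infty$ has no zero entry, $E_{\phase}(y_\infty)$ is the singleton $\{\phase(y_\infty)\}$, forcing $u=\phase(y_\infty)$ and giving the final assertion.

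The main obstacle is the discontinuity of $\phase$ at zero entries, which blocks a direct passage to the limit in $y'_k=b\odot\phase(y_k)$; the remedy is to extract a convergent subsequence of $(y'_k)$ by compactness of $C_1$ and only afterwards identify its limit coordinatewise with an element of $b\odot E_{\phase}(y_\infty)$. The Pythagorean identity forcing $||y_{k+1}-y_k||\to 0$ is the other essential ingredient, since it is what transfers the limit from $y_{k_j}$ to $y_{k_j+1}$ and lets the two projection steps close up in the limit.
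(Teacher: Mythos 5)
Your proof is correct. It shares the paper's skeleton — boundedness from $||y'_k||=||b||$, monotonicity of the distances $d_{k+1}\leq ||y_{k+1}-y'_k||\leq d_k$, compactness extraction, and the same coordinatewise identification of $y'_\infty$ as $b\odot u$ with $u\in E_{\phase}(y_\infty)$ (including the treatment of the discontinuity of $\phase$ at zero entries) — but the step that forces the accumulation point to be fixed is genuinely different. The paper extracts three convergent subsequences, $y_{\phi(n)}\to y_\infty$, $y'_{\phi(n)}\to y'_\infty$ and $y_{\phi(n)+1}\to y_\infty^{+1}$, passes the distance chain to the limit to get $||y_\infty-y'_\infty||=||y'_\infty-y_\infty^{+1}||=d(y'_\infty,\Range(A))$, and concludes $y_\infty=y_\infty^{+1}=(AA^\dag)y'_\infty$ from the uniqueness of the projection onto the convex set $\Range(A)$. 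You instead prove asymptotic regularity, $||y_{k+1}-y_k||\to 0$, via the Pythagorean identity $||y'_k-y_k||^2=||y'_k-y_{k+1}||^2+||y_{k+1}-y_k||^2$ (valid for $k\geq 1$, as you correctly note, since $y_k\in\Range(A)$ only from $k=1$ on), and then need only two subsequences, the transfer $y_{k_j+1}\to y_\infty$ being automatic. Your route yields a strictly stronger intermediate statement — consecutive iterates become close along the \emph{whole} sequence, not just along the extracted subsequence, which for instance implies that the set of accumulation points is connected — whereas the paper's argument is slightly leaner at the limit and, as written, uses only convexity of $\Range(A)$ (uniqueness of nearest points) rather than its linear structure; your equality would degrade to an inequality over a general closed convex set, but that inequality would still suffice for your conclusion.
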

Despite the relative simplicity of this characteristic property, it is extremely difficult to exactly compute the stagnation points, determine their attraction basin or avoid them when the algorithm happens to run into them.

\section{Alternating projections with good initialization\label{s:with_init}}

In this section, we prove our main result: for $m=O(n)$ Gaussian independent sensing vectors, the method of alternating projections converges to the correct solution with high probability, if it is carefully initialized.

\subsection{Local convergence of alternating projections}

We begin with a key result, that we will need to establish our statement. This result is a local contraction property of the alternating projections operator $x\to A^\dag(b\odot\phase(Ax))$.

\begin{thm}\label{thm:local_convergence}
There exist $\epsilon,C_1,C_2,M>0$, and $\delta\in]0;1[$ such that, if $m\geq Mn$, then, with probability at least
\begin{equation*}
1-C_1\exp(-C_2m),
\end{equation*}
the following property holds: for any $x\in\C^n$ such that
\begin{equation*}\label{eq:hyp_x}
\inf_{\phi\in\R}||e^{i\phi}x_0-x||\leq \epsilon ||x_0||,
\end{equation*}
we have
\begin{equation}\label{eq:progres_lineaire}
\inf_{\phi\in\R}||e^{i\phi}x_0-A^\dag(b\odot\phase(Ax))||\leq \delta \inf_{\phi\in\R}||e^{i\phi}x_0-x||.
\end{equation}
\end{thm}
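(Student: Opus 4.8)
The plan is to treat $T(x)=A^\dagger(b\odot\phase(Ax))$ as a perturbation of its linearization at $x_0$, which I expect to contract by a factor close to $1/2$. Note first that $T(x_0)=x_0$, since $b\odot\phase(Ax_0)=|Ax_0|\odot\phase(Ax_0)=Ax_0$ and $A^\dagger A=\Id$ ($A$ is injective with probability one). Using the global phase invariance, for a given $x$ I choose the representative so that $\inf_\phi\|e^{i\phi}x_0-x\|$ is attained at $\phi=0$, and set $h=x-x_0$; optimality of the phase forces $\scal{x_0}{h}\in\R$. Since $\inf_\phi\|e^{i\phi}x_0-T(x)\|\le\|x_0-T(x)\|$, it then suffices to prove $\|T(x)-x_0\|\le\delta\|h\|$. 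Because the statement is invariant under scaling of $x_0$ and under a global unitary change of coordinates, I may also assume $\|x_0\|=1$ and $x_0=e_1$; then $(Ax_0)_k=\overline{(a_k)_1}$ and the phase condition becomes $h_1\in\R$.

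Next I expand the measurement-space error $e:=b\odot\phase(Ax)-Ax_0=|Ax_0|\odot(\phase(Ax)-\phase(Ax_0))$ coordinatewise. With $D=\diag(\phase(Ax_0))$, a first-order expansion of $\phase$ gives $e=e^{(1)}+e^{(2)}$ with $e^{(1)}=\tfrac12(Ah-D^2\overline{Ah})$. Applying $A^\dagger$ and using $A^\dagger A=\Id$,
\begin{equation*}
A^\dagger e^{(1)}=\tfrac12\Big(h-\big(\tfrac1mA^*A\big)^{-1}M\,\bar h\Big),\qquad M:=\frac1m\sum_{k=1}^m\phase(\scal{a_k}{x_0})^2\,a_ka_k^T.
\end{equation*}
Crucially $M$ depends on $x_0$ but not on $x$, and is an average of i.i.d. matrices; a direct second-moment computation (the random phases cancel exactly on the $(1,1)$ entry and average to zero elsewhere) gives $\E[M]=e_1e_1^T$. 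The core of the main term is therefore the concentration estimate that $|||M-e_1e_1^T|||\le\eta$ and $|||\tfrac1mA^*A-\Id|||\le\eta$ hold, for a small $\eta$, with probability $1-C_1\exp(-C_2m)$ as soon as $m\ge Mn$; this is where the sub-exponential tails of $a_ka_k^T$ enter and are tamed by truncation followed by a matrix Bernstein (or $\epsilon$-net) bound. Combined with $h_1\in\R$, which turns $e_1e_1^T\bar h$ into the component $\scal{x_0}{h}x_0$ of $h$ along $x_0$, this yields $A^\dagger e^{(1)}\approx\tfrac12(h-\scal{x_0}{h}x_0)$, of norm at most $(\tfrac12+O(\eta))\|h\|$.

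The remaining and most delicate contribution is the nonlinear remainder, controlled through $\|A^\dagger e^{(2)}\|\le|||A^\dagger|||\,\|e^{(2)}\|\le(C/\sqrt m)\|e^{(2)}\|$, where $|||A^\dagger|||\le C/\sqrt m$ by the good conditioning of $A$. A coordinatewise estimate gives $|e^{(2)}_k|\lesssim\min\big(|(Ah)_k|,\,|(Ah)_k|^2/|(Ax_0)_k|\big)$, so I split the indices at a threshold: on the ``regular'' set $\{|(Ah)_k|<c|(Ax_0)_k|\}$ the second-order bound yields
\begin{equation*}
\sum_{\mathrm{reg}}|e^{(2)}_k|^2\lesssim\sum_{\mathrm{reg}}\frac{|(Ah)_k|^4}{|(Ax_0)_k|^2}\le c^2\sum_k|(Ah)_k|^2\lesssim c^2\,m\,\|h\|^2,
\end{equation*}
using $\|Ah\|^2\lesssim m\|h\|^2$. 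On the ``dangerous'' set $\{|(Ah)_k|\ge c|(Ax_0)_k|\}$, where $|(Ax_0)_k|=|\scal{a_k}{x_0}|$ is small and the phase is ill-behaved, I use $|e^{(2)}_k|\lesssim|(Ah)_k|$ and bound $\sum_{\mathrm{dang}}|(Ah)_k|^2$. Conditioning on $(a_k)_1$ (hence on $|(Ax_0)_k|$) and treating the perpendicular part of $a_k$ as a fresh Gaussian, a Gaussian tail computation shows $\E[|(Ah)_k|^2\mathbf 1_{\mathrm{dang}}]\lesssim\|h\|^4/c^2$, so that $\sum_{\mathrm{dang}}|(Ah)_k|^2\lesssim(\epsilon^2/c^2)\,m\,\|h\|^2$. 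Choosing $c$ a small absolute constant and then $\epsilon$ small (so that $\epsilon\ll c$), $\|A^\dagger e^{(2)}\|\lesssim(c+\epsilon/c)\|h\|$ can be made smaller than any prescribed fraction.

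Combining the two contributions gives $\|T(x)-x_0\|\le(\tfrac12+O(\eta)+O(c)+O(\epsilon/c))\|h\|=:\delta\|h\|$ with $\delta<1$ once $\eta,c,\epsilon$ are small and $m/n$ is large, which is the claim. The main obstacle is the remainder analysis of the previous paragraph, for two intertwined reasons. First, $\phase$ is not Lipschitz near $0$, so the coordinates with small $|\scal{a_k}{x_0}|$—where the non-convexity of the problem concentrates—cannot be handled by a naive expansion and must be isolated and estimated by hand, as above. Second, and more seriously, the theorem requires a single event of probability $1-C_1\exp(-C_2m)$ valid simultaneously for all $x$ in the $\epsilon$-ball, whereas both the dangerous set and the sums above depend on $h$. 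I would obtain this uniformity by reducing, via scale invariance, to directions $\hat h=h/\|h\|$ on the unit sphere, controlling the empirical sums on a fine net (whose cardinality $e^{O(n)}$ is absorbed by $\exp(-C_2m)$ because $m\ge Mn$), and passing from the net to all directions; the discontinuity of the indicator $\mathbf 1_{\mathrm{dang}}$ is the delicate point, which I would circumvent by sandwiching it between Lipschitz functions before applying the net argument. The $x$-independence of the dominant matrix $M$ is what makes the main term need only a single concentration inequality rather than a uniform one.
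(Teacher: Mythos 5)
Your proposal is correct in substance, but the machinery is genuinely different from the paper's, so it is worth comparing. First, a hidden identity: since $\tfrac12\bigl(z-\phase(z_0)^2\bar z\bigr)=i\,\phase(z_0)\,\Im\bigl(\overline{\phase(z_0)}\,z\bigr)$, your linear term $e^{(1)}$ is exactly the imaginary-part term that the paper isolates via Lemma \ref{lem:diff_phase}, and your ``dangerous set'' is exactly the paper's indicator term; the two decompositions are parallel, and the real divergence is in how each piece is estimated. For the linear part, the paper stays in measurement space and proves the operator bound $\|\Im(v\odot\overline{\phase(Ax_0)})\|\le\tfrac45\|v\|$ on $\Range(A)\cap\{Ax_0\}^\perp$ (Lemma \ref{lem:second_term}, via Gaussian matrix norms after rotating $x_0$ to $e_1$), which combined with the factor $\tfrac65$ gives the crude but sufficient constant $\tfrac{24}{25}$; you instead pull back through $A^\dag$ and exploit that the matrix $M=\tfrac1m\sum_k\phase(\scal{a_k}{x_0})^2a_ka_k^T$ is independent of $x$, concentrates around $x_0x_0^T$ (your moment computation and the use of $\scal{x_0}{h}\in\R$ are correct), and yields the sharper asymptotic contraction $\delta\approx\tfrac12+O(\eta)$ from a single matrix-concentration estimate. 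For the remainder, the paper's Lemma \ref{lem:first_term} achieves uniformity over all $v\in\Range(A)$ \emph{without nets}, by a union bound over coordinate subsets $S$ (Lemmas \ref{lem:S_geq_bm} and \ref{lem:S_leq_bm}: the dangerous set must be small, and on small sets rows of $A$ are uniformly well-conditioned); you use moment bounds plus Bernstein plus an $e^{O(n)}$-net over directions with Lipschitz-sandwiched indicators. Your route buys the better constant and matches the standard Wirtinger-flow toolkit; its cost is precisely the uniformity argument you leave as a sketch, which is the heaviest part, though your ingredients (soft indicators, nets absorbed by $m\ge Mn$) are the standard and workable ones. Two small repairs you would need when writing it out: the dangerous set is not scale-invariant in $h$, so replace ``scale invariance'' by monotonicity (the set grows with $\|h\|$, hence it suffices to treat $\|h\|=\epsilon\|x_0\|$ and unit directions); and on the dangerous set $|e^{(2)}_k|\le(1+2/c)|(Ah)_k|$, so your final remainder bound is $O(c+\epsilon/c^2)$ rather than $O(c+\epsilon/c)$ --- harmless, since $\epsilon$ is chosen after $c$.
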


\begin{proof}
For any $x\in\C^n$, we can write $Ax$ as
\begin{equation}\label{eq:Ax_orth}
Ax = \lambda_x (Ax_0) + \mu_x v^x,
\end{equation}
where $\lambda_x\in\C,\mu_x\in\R^+$, and $v^x\in\Range(A)$ is a unitary vector orthogonal to $Ax_0$.

As we will see, the norm of $|||A^\dag|||$ can be upper bounded by a number arbitrarily close to $1$, so to prove Inequality \eqref{eq:progres_lineaire}, it is enough to focus on the following quantity:
\begin{align*}
\inf_{\phi\in\R}&||e^{i\phi}A x_0-b\odot \phase(Ax)||\\
=&\inf_{\phi\in\R}||e^{i\phi}|A x_0|\odot \phase(Ax_0)-|Ax_0|\odot \phase(Ax)||.
\end{align*}
As $Ax$ is close to $Ax_0$ (up to a global phase), we can use a kind of mean value inequality, formalized by the following lemma (proven in Paragraph \ref{ss:diff_phase}). In the lemma, the term $\Im(z/z_0)$ is the derivative of the $\phase$ function (up to phase shift); the term $1_{|z|\geq |z_0|/6}$ can be thought of as a second order term.
\begin{lem}\label{lem:diff_phase}
For any $z_0,z\in\C$,
\begin{equation*}
|\phase(z_0+z)-\phase(z_0)| \leq 2. 1_{|z|\geq |z_0|/6} + \frac{6}{5}\left|\Im\left(\frac{z}{z_0}\right)\right|.
\end{equation*}
\end{lem}
From this lemma, for any $i=1,\dots,m$,
\begin{align*}
&\quad|\phase(\lambda_x)(Ax_0)_i-(b\odot\phase(Ax))_i|\\
&=\left|\phase(\lambda_x)(Ax_0)_i-|Ax_0|_i\phase(\lambda_x(Ax_0)_i+\mu_x (v^x)_i)\right|\\
&= |Ax_0|_i\left|\phase(Ax_0)_i-\phase\left((Ax_0)_i+\frac{\mu_x}{\lambda_x} (v^x)_i\right) \right|\\
&\leq 2.|Ax_0|_i1_{|\mu_x/\lambda_x||v^x_i|\geq |Ax_0|_i/6} + \frac{6}{5}|Ax_0|_i\left|\Im \left(\frac{\frac{\mu_x}{\lambda_x}v^x_i}{(Ax_0)_i}\right)\right|\\
&= 2.|Ax_0|_i1_{|\mu_x/\lambda_x||v^x_i|\geq |Ax_0|_i/6} + \frac{6}{5}\left|\Im \left(\frac{\frac{\mu_x}{\lambda_x}v^x_i}{\phase((Ax_0)_i)}\right)\right|.
\end{align*}
As a consequence,
\begin{align}
&\quad||\phase(\lambda_x)(Ax_0)-b\odot\phase(Ax)||\nonumber\\
&\leq \Bigg|\Bigg|
2.|Ax_0|\odot 1_{|\mu_x/\lambda_x||v^x|\geq |Ax_0|/6}\nonumber \\
&\hskip 1cm + \frac{6}{5}\left|\Im \left(\left(\frac{\mu_x}{\lambda_x}v^x\right)\odot\overline{\phase(Ax_0)}\right)\right|\,
\Bigg|\Bigg|\nonumber\\
&\leq 2\left|\left|
|Ax_0|\odot 1_{6|\mu_x/\lambda_x||v^x|\geq |Ax_0|}\right|\right| \nonumber\\
&\hskip 1cm+ \frac{6}{5}\left|\left|\Im \left(\left(\frac{\mu_x}{\lambda_x}v^x\right)\odot\overline{\phase(Ax_0)}\right)\right|
\right|.\label{eq:error_sum}
\end{align}
Two technical lemmas allow us to upper bound the terms of this sum. The first one is proved in Paragraph \ref{ss:first_term}, the second one in Paragraph \ref{ss:second_term}.
\begin{lem}\label{lem:first_term}
For any $\eta>0$, there exists $C_1,C_2,M,\gamma>0$ such that the inequality
\begin{equation*}
||\,|Ax_0|\odot 1_{|v|\geq |Ax_0|}||\leq \eta ||v||
\end{equation*}
holds for any $v\in\Range(A)$ such that $||v||<\gamma ||Ax_0||$, with probability at least
\begin{equation*}
1-C_1\exp(-C_2m),
\end{equation*}
when $m\geq Mn$.
\end{lem}
\begin{lem}\label{lem:second_term}
For $M,C_1>0$ large enough, and $C_2>0$ small enough, when $m\geq M n$, the property
\begin{equation*}
||\Im(v\odot\overline{\phase(Ax_0)})||\leq \frac{4}{5}||v||
\end{equation*}
holds for any $v\in\Range(A)\cap \{Ax_0\}^\perp$, with probability at least
\begin{equation*}
1-C_1\exp(-C_2 m).
\end{equation*}
\end{lem}

We also recall a classical result, that allows us to control the norms of $A$ and $A^\dag$.
\begin{prop}[\citep{davidson}, Thm II.13]\label{prop:davidson}
If $A$ is chosen according to Equation \eqref{eq:def_A}, then, for any $t$, with probability at least
\begin{equation*}
1-2\exp\left(-mt^2\right),
\end{equation*}
we have, for any $x\in\C^n$,
\begin{align*}
\sqrt{m}\left(1-\sqrt{\frac{n}{m}}-t\right)||x||
&\leq ||Ax||;\\
||Ax||&\leq \sqrt{m}\left(1+\sqrt{\frac{n}{m}}+t\right)||x||.
\end{align*}
\end{prop}
This lemma says that, if $m\geq Mn$ for $M$ large enough, then $||Ax||/({\sqrt{m}||x||})$ is arbitrarily close to $1$, uniformly over $x\in\C^n-\{0\}$. In particular, $|||A|||/\sqrt{m}$ and $ |||A^\dag|||.\sqrt{m}$ can be upper bounded by constants arbitrarily close to $1$, with probability $1-2\exp(-C_2 m)$.

Let us set $\epsilon^x=\inf_{\phi\in\R}\frac{||e^{i\phi}x_0-x||}{||x_0||}\leq\epsilon$. We have
\begin{align*}
\epsilon^x |||A|||\,||x_0||
&\geq \inf_{\phi\in\R}||e^{i\phi}Ax_0-Ax||\\
&=\sqrt{(1-|\lambda_x|)^2||Ax_0||^2+|\mu_x|^2}.
\end{align*}
Bounding $|||A|||$ as in the last remark, we deduce from this inequality that, if $\epsilon$ is small enough,
\begin{equation}\label{eq:mu_over_lambda}
\frac{|\mu_x|}{|\lambda_x|}\leq 1.01\sqrt{m}\epsilon^x ||x_0||.
\end{equation}

Let $\eta>0$ be such that
\begin{equation*}
12\eta + \frac{24}{25}<0.98.
\end{equation*}
We define $\gamma>0$ as in Lemma \ref{lem:first_term}. Using Lemmas \ref{lem:first_term} and \ref{lem:second_term}, we can upper bound Equation \eqref{eq:error_sum} by
\begin{align*}
||\phase(\lambda_x)(Ax_0)&-b\odot \phase(Ax)||\\
&\leq \left(12\eta +\frac{24}{25}\right)\left|\frac{\mu_x}{\lambda_x}\right|\\
&\leq 0.98\times 1.01\sqrt{m} \epsilon^x||x_0||\\
&\leq 0.99 \sqrt{m}\epsilon^x ||x_0||.
\end{align*}
This holds for all $x$ such that $\inf_{\phi\in\R}||e^{i\phi}x_0-x|| \leq\epsilon ||x_0||$, with probability at least
\begin{equation*}
1-C_1'\exp(-C_2'm).
\end{equation*}
(From Equation \eqref{eq:mu_over_lambda} and Proposition \ref{prop:davidson}, the condition $||v||<\gamma ||Ax_0||$ in Lemma \ref{lem:first_term} is satisfied if $\epsilon>0$ is small enough.)

This implies
\begin{align*}
&\quad\inf_{\phi\in\R}||e^{i\phi}x_0-A^\dag(b\odot\phase(Ax))||\\
&\leq ||\phase(\lambda_x) x_0-A^\dag(b\odot\phase(Ax))||\\
&\leq |||A^\dag|||\,||\phase(\lambda_x)(Ax_0)-b\odot \phase(Ax)||\\
&\leq 0.99 |||A^\dag|||\sqrt{m}\epsilon^x ||x_0||\\
&= 0.99 |||A^\dag|||\sqrt{m}\inf_{\phi\in\R}||e^{i\phi}x_0-x|| .
\end{align*}
As we have seen, $|||A^\dag|||\sqrt{m}$ can be bounded by any constant larger than $1$ if $M$ is large enough (with high probability), so if $\delta\in]0;1[$ is close enough to $1$,
\begin{align*}
\inf_{\phi\in\R}||e^{i\phi}x_0-A^\dag(b\odot\phase(Ax))||
\leq \delta \inf_{\phi\in\R}||e^{i\phi}x_0-x||,
\end{align*}
and this holds, with high probability, for any $x$ such that $\inf_{\phi\in\R}||e^{i\phi}x_0-x||\leq\epsilon||x_0||$.

\end{proof}

\subsection{Global convergence}

In the last paragraph, we have seen that the alternating projections operator is contractive, with high probability, in an $\epsilon ||x_0||$-neighborhood of the solution $x_0$. This implies that, if the starting point of alternating projections is at distance at most $\epsilon||x_0||$ from $x_0$, alternating projections converge to $x_0$. So if we have a way to find such an initial point, we obtain a globally convergent algorithm.

Several initialization methods have been proposed that achieve the precision we need with an optimal number of measurements, that is $m=O(n)$. Let us mention the truncated spectral initialization by \citep{candes_wirtinger2} (improving upon the slightly suboptimal spectral initializations introduced by \citep{netrapalli} and \citep{candes_wirtinger}), the null initialization by \citep{chen_fannjiang} and the method described by \citep{gao_xu}. All these methods consist in computing the largest or smallest eigenvector of
\begin{equation*}
\sum_{i=1}^m\alpha_i a_ia_i^*,
\end{equation*}
where the $\alpha_1,\dots,\alpha_m$ are carefully chosen coefficients, that depend only on $b$.

The method of \citep{candes_wirtinger2}, for example, has the following guarantees.
\begin{thm}[Proposition 3 of \citep{candes_wirtinger2}]\label{thm:guarantee_init}
Let $\epsilon>0$ be fixed.

We define $z$ as the main eigenvector of
\begin{equation}
\frac{1}{m}\sum_{i=1}^m|a_i^*x_0|^2 a_ia_i^* 1_{|a_i^*x_0|^2\leq \frac{9}{m}\sum_{j=1}^m|a^*_ix_0|^2}.\label{eq:init_matrix}
\end{equation}
There exist $C_1,C_2,M>0$ such that, with probability at least
\begin{equation*}
1-C_1\exp(-C_2 m),
\end{equation*}
the vector $z$ obeys
\begin{equation*}
\inf_{\phi\in\R,\lambda\in\R^*_+}||e^{i\phi}x_0-\lambda z||\leq \epsilon ||x_0||,
\end{equation*}
provided that $m\geq M n$.
\end{thm}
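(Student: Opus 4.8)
The plan is to follow the standard blueprint for analyzing spectral initializations: identify the expectation of the initialization matrix, show it has a spectral gap with leading eigenvector aligned to $x_0$, prove concentration of the random matrix around this expectation in operator norm, and finish with a Davis--Kahan type eigenvector perturbation bound. By homogeneity of the measurements I would first normalize $\|x_0\|=1$, since the matrix in \eqref{eq:init_matrix} is unaffected by the phase of $x_0$ and the eigenvector $z$ is determined only up to scaling anyway.

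The first substantive step is to understand the idealized matrix obtained by replacing the data-dependent threshold $\frac{9}{m}\sum_j|a_j^*x_0|^2$ with the deterministic value $9$, namely the expectation of a single summand $\E\big[|a_1^*x_0|^2\,a_1a_1^*\,1_{|a_1^*x_0|^2\leq 9}\big]$. By rotational invariance of the complex normal law around the $x_0$-axis, this expectation must take the form $\beta_1\Id+\beta_2\,x_0x_0^*$ with $\beta_1,\beta_2>0$; indeed, without truncation one computes $\E[|a_1^*x_0|^2 a_1a_1^*]=\Id+x_0x_0^*$, and the truncation only shrinks the two scalars while leaving $\beta_2>0$. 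Hence the expected matrix has $x_0$ as its unique leading eigenvector with a spectral gap $\beta_2$ bounded away from zero by an absolute constant. This is the structural fact that makes the leading eigenvector informative.

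The core of the argument is a uniform concentration estimate showing that, with probability at least $1-C_1\exp(-C_2m)$, the matrix \eqref{eq:init_matrix} is within operator-norm distance $\tau$ of $\beta_1\Id+\beta_2 x_0x_0^*$, where $\tau$ can be made as small as we like by taking $M=m/n$ large. I would prove this via an $\varepsilon$-net over the unit sphere: for each fixed unit $u$, the quadratic form $u^*(\cdot)u=\frac1m\sum_i|a_i^*x_0|^2|a_i^*u|^2\,1_{|a_i^*x_0|^2\leq 9}$ is an average of independent variables that are \emph{bounded in the $|a_i^*x_0|^2$ factor} thanks to the truncation and sub-exponential in the $|a_i^*u|^2$ factor, so a Bernstein-type inequality gives deviation probability $\exp(-cm\tau^2)$; a union bound over a net of cardinality $\exp(Cn)$ then requires precisely $m\geq Mn$. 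I expect this concentration step, together with controlling the mismatch between the random threshold and $9$, to be the main obstacle: without the indicator the summands carry a fourth Gaussian moment and are too heavy-tailed for the $O(n)$ regime, and the fact that the empirical threshold couples all the $a_j$ through the sum must be handled separately. Here I would use that $\frac1m\sum_j|a_j^*x_0|^2$ concentrates around $1$, so the random threshold lies in a narrow band around $9$ and the indices where the two indicators disagree contribute negligibly to the operator norm.

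With concentration in hand, the conclusion is routine. Since $\beta_1\Id+\beta_2 x_0x_0^*$ has a gap of size $\beta_2$ and the perturbation has operator norm at most $\tau\ll\beta_2$, the Davis--Kahan $\sin\theta$ theorem yields $\mathrm{dist}(z,\mathrm{span}(x_0))\leq C\tau/\beta_2$ for the leading eigenvector $z$ of \eqref{eq:init_matrix}. Rephrasing this distance between lines as a distance after optimizing over the global phase $\phi$ and the positive scale $\lambda$ gives
\begin{equation*}
\inf_{\phi\in\R,\lambda\in\R^*_+}\|e^{i\phi}x_0-\lambda z\|\leq C'\tau,
\end{equation*}
and choosing $M$ large enough that $C'\tau\leq\epsilon$ completes the proof.
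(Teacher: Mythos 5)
You should know at the outset that the paper does not prove this statement: Theorem \ref{thm:guarantee_init} is imported verbatim as Proposition 3 of \citep{candes_wirtinger2} and used as a black box to initialize Algorithm \ref{alg:algo_complet}, so there is no internal proof to compare against. Your sketch follows essentially the strategy of the cited reference (and the standard one for truncated spectral initialization): replace the data-driven threshold by a deterministic one, exploit the boundedness created by the truncation to get operator-norm concentration with failure probability $\exp(-cm)$ via Bernstein plus an $\exp(Cn)$-size net (you correctly identify that this is exactly what the truncation buys --- without it the summands are too heavy-tailed for the $m=O(n)$ regime), and finish with a Davis--Kahan bound; the identification of $\inf_{\phi\in\R,\lambda\in\R^*_+}||e^{i\phi}x_0-\lambda z||$ with $||x_0||$ times the sine of the principal angle between the complex lines $\C x_0$ and $\C z$ is also correct.

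Two steps are thinner than you present them. First, the claim that ``the truncation only shrinks the two scalars while leaving $\beta_2>0$'' is not a valid argument. Writing $g=a_1^*x_0$ with $||x_0||=1$, so that $|g|^2$ is exponential with mean $1$, one finds
\begin{equation*}
\E\left[|g|^2\,a_1a_1^*\,1_{|g|^2\leq\rho}\right]=\beta_1\Id+\beta_2\,x_0x_0^*,
\qquad
\beta_2=\E\left[|g|^4 1_{|g|^2\leq\rho}\right]-\E\left[|g|^2 1_{|g|^2\leq\rho}\right],
\end{equation*}
and $\beta_2$ is \emph{negative} for small thresholds: $\rho=1$ gives $\beta_2=(2-5e^{-1})-(1-2e^{-1})=1-3e^{-1}<0$, in which case the leading eigenvector of the expectation is orthogonal to $x_0$ and the scheme fails outright. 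The positivity of the gap is a quantitative property of the constant $9$ in \eqref{eq:init_matrix}: $\beta_2=(2-101e^{-9})-(1-10e^{-9})=1-91e^{-9}\approx 0.99$. This is a one-line computation, but it must be done; no monotonicity heuristic yields it. Second, ``the indices where the two indicators disagree contribute negligibly'' needs an actual mechanism, since the disagreement set is random and correlated with all the $a_i$. The cleanest repair uses monotonicity: on the event $\left|\frac{1}{m}\sum_j|a_j^*x_0|^2-1\right|\leq\delta$, the matrix \eqref{eq:init_matrix} is sandwiched in the semidefinite order between the two matrices with deterministic thresholds $9(1-\delta)$ and $9(1+\delta)$ (all summands are positive semidefinite and the indicator sets are nested); each of these concentrates by your net argument, their expectations differ by $O(\delta)$ in operator norm, and for Hermitian matrices $A\preceq B\preceq C$ one has $|||B|||\leq\max(|||A|||,|||C|||)$. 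Alternatively, bound the cardinality of the disagreement set by $O(\delta m)$ and invoke a bound holding uniformly over all small index sets, in the spirit of Lemma \ref{lem:S_leq_bm} of this paper. With these two repairs, your outline is a correct proof.
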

Combining this initialization procedure with alternating projections, we get Algorithm \ref{alg:algo_complet}. As shown by the following corollary, it converges towards the correct solution, at a linear rate, with high probability, for $m=O(n)$.
\begin{algorithm}
\SetKwInOut{Input}{Input}
\SetKwInOut{Output}{Output}
\Input{$A\in\C^{m\times n},b=|Ax_0|\in\R^m,T\in\N^*$.}
\BlankLine
\textbf{Initialization:} set $z_0$ to be the main eigenvector of the matrix in Equation \eqref{eq:init_matrix}.

\For{$t=1$ \KwTo $T$}{
Set $z_{t}\leftarrow A^\dag(b\odot\phase(Az_{t-1}))$.}
\BlankLine
\Output{$z_T$.}
\BlankLine
\caption{Alternating projections with truncated spectral initialization\label{alg:algo_complet}}
\end{algorithm}
\begin{cor}\label{cor:global_convergence}
There exist $C_1,C_2,M>0,\delta\in]0;1[$ such that, with probability at least
\begin{equation*}
1-C_1\exp(-C_2m),
\end{equation*}
Algorithm \ref{alg:algo_complet} satisfies
\begin{equation}\label{eq:global_convergence}
\forall t\in\N^*,\quad\quad
\inf_{\phi\in\R}||e^{i\phi}x_0-z_t||\leq \delta^t ||x_0||,
\end{equation}
provided that $m\geq Mn$.
\end{cor}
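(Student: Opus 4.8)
The plan is to chain the two high-probability facts already at our disposal: the initialization guarantee of Theorem \ref{thm:guarantee_init} and the local contraction property of Theorem \ref{thm:local_convergence}. First I would fix $\epsilon$ and $\delta\in]0;1[$ to be the constants produced by Theorem \ref{thm:local_convergence}, and then invoke Theorem \ref{thm:guarantee_init} with \emph{this same} $\epsilon$ (legitimate, since its $\epsilon>0$ is free). Taking $M$ to be the larger of the two thresholds and applying a union bound to the two failure probabilities, both conclusions hold simultaneously with probability at least $1-C_1\exp(-C_2m)$ for suitable constants $C_1,C_2,M>0$; I would carry out the rest of the argument on this event. A point worth stressing here is that the contraction in Theorem \ref{thm:local_convergence} is stated \emph{uniformly} over all $x$ in the neighborhood on a single high-probability event. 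This uniformity is exactly what allows us to feed the random iterates $z_t$ (which depend on $A$) into it, rather than only a fixed deterministic point.

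On the good event, the initialization returns $z_0$, the main eigenvector of the matrix in Equation \eqref{eq:init_matrix}, with $\inf_{\phi\in\R,\lambda\in\R^*_+}||e^{i\phi}x_0-\lambda z_0||\leq \epsilon||x_0||$; in particular there is some $\lambda_0>0$ with $\inf_{\phi\in\R}||e^{i\phi}x_0-\lambda_0 z_0||\leq \epsilon||x_0||$. The one non-routine point is that the initialization controls closeness up to \emph{phase and positive scaling}, whereas the contraction hypothesis of Theorem \ref{thm:local_convergence} asks for closeness up to \emph{phase only}. I would resolve this by the scale-invariance of the alternating projection operator: since $\phase(\lambda z)=\phase(z)$ for every real $\lambda>0$, the map $x\mapsto A^\dag(b\odot\phase(Ax))$ is unchanged when its argument is multiplied by a positive real. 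Hence $z_1=A^\dag(b\odot\phase(Az_0))=A^\dag(b\odot\phase(A(\lambda_0 z_0)))$, and since $\lambda_0 z_0$ lies within the contraction radius, Theorem \ref{thm:local_convergence} applies to it and gives $\inf_{\phi\in\R}||e^{i\phi}x_0-z_1||\leq \delta\,\inf_{\phi\in\R}||e^{i\phi}x_0-\lambda_0 z_0||\leq \delta\epsilon||x_0||$.

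It then remains to iterate. I would prove by induction that $\inf_{\phi\in\R}||e^{i\phi}x_0-z_t||\leq \epsilon||x_0||$ for every $t\geq 1$: the base case $t=1$ holds because $\delta\epsilon<\epsilon$, and if it holds at step $t$ then $z_t$ is admissible in Theorem \ref{thm:local_convergence}, so $\inf_{\phi\in\R}||e^{i\phi}x_0-z_{t+1}||\leq \delta\,\inf_{\phi\in\R}||e^{i\phi}x_0-z_t||\leq \epsilon||x_0||$. Unrolling this recursion from the step-$1$ bound yields $\inf_{\phi\in\R}||e^{i\phi}x_0-z_t||\leq \delta^{t-1}(\delta\epsilon)||x_0||=\delta^t\epsilon||x_0||\leq \delta^t||x_0||$ for all $t\in\N^*$, which is precisely Equation \eqref{eq:global_convergence}. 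The expected main obstacle is therefore not analytic but bookkeeping: correctly aligning the two $\epsilon$'s, combining the two probabilistic events, and — the only genuinely substantive step — bridging the scaling mismatch via the positive-homogeneity of $\phase$, after which the linear rate follows by a one-line geometric induction.
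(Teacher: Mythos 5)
Your proposal is correct and follows essentially the same route as the paper's own proof: fix $\epsilon,\delta$ from Theorem \ref{thm:local_convergence}, intersect the two high-probability events, bridge the scaling mismatch of the initialization via $\phase(\lambda z)=\phase(z)$ for $\lambda>0$ (the paper does exactly this by applying the contraction to $x=\lambda z_0$), and iterate. Your write-up is merely more explicit than the paper's terse ``the same reasoning can be reapplied'' — in particular about the uniformity of the contraction event and the induction hypothesis that iterates remain in the $\epsilon\|x_0\|$-neighborhood — which is a virtue, not a deviation.
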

\begin{proof}
Let us fix $\epsilon,\delta\in]0;1[$ as in Theorem \ref{thm:local_convergence}. Let us assume that the properties described in Theorems \ref{thm:local_convergence} and \ref{thm:guarantee_init} hold; it happens on an event of probability at least
\begin{equation*}
1-C_1\exp(-C_2m),
\end{equation*}
provided that $m\geq Mn$, for some constants $C_1,C_2,M>0$.

Let us prove that, on this event, Equation \eqref{eq:global_convergence} also holds.

We proceed by recursion. From Theorem \ref{thm:guarantee_init}, there exist $\phi\in\R,\lambda\in\R^*_+$ such that
\begin{equation*}
||e^{i\phi}x_0-\lambda z_0||\leq\epsilon||x_0||.
\end{equation*}
So, from Theorem \ref{thm:local_convergence}, applied to $x=\lambda z_0$,
\begin{align*}
\inf_{\phi\in\R}||e^{i\phi}x_0-z_1||&
=\inf_{\phi\in\R}||e^{i\phi}x_0-A^\dag(b\odot \phase(z_0))||\\
&=\inf_{\phi\in\R}||e^{i\phi}x_0-A^\dag(b\odot \phase(\lambda z_0))||\\
&\leq \delta\inf_{\phi\in\R} ||e^{i\phi}x_0-\lambda z_0||\\
&\leq \epsilon \delta ||x_0||.
\end{align*}
This proves Equation \eqref{eq:global_convergence} for $t=1$.

The same reasoning can be reapplied to also prove the equation for $t=2,3,\dots$.
\end{proof}

\subsection{Complexity\label{ss:complexity}}

Let $\eta>0$ be the relative precision that we want to achieve:
\begin{equation*}
\inf_{\phi\in\R}||e^{i\phi}x_0-z_T||\leq \eta||x_0||.
\end{equation*}
Let us compute the number of operations that Algorithm \ref{alg:algo_complet} requires to reach this precision.

 The main eigenvector of the matrix defined in Equation \eqref{eq:init_matrix} can be computed - up to precision $\eta$ - in approximately $O(\log(1/\eta)+\log(n))$ power iterations. Each power iteration is essentially a matrix-vector multiplication, and thus requires $O(mn)$ operations.\footnote{These matrix-vector multiplications can be computed without forming the whole matrix (which would require $O(mn^2)$ operations), because this matrix factorizes as
\begin{equation*}
\frac{1}{m}A^* \mathrm{Diag}(|Ax_0|^2\odot I) A,
\end{equation*}
where $I\in\R^m$ is such that $\forall i\leq m,I_i=1_{|A_ix_0|^2\leq\frac{9}{m}\sum_{j=1}^m|A_ix_0|^2}$.}
As a consequence, the complexity of the initialization is
\begin{equation*}
O(mn\left(\log(1/\eta)+\log(n)\right)).
\end{equation*}
Then, at each step of the \texttt{for} loop, the most costly operation is the multiplication by $A^\dag$. When performed with the conjugate gradient method, it requires $O(mn\log(1/\eta))$ operations. To reach a precision equal to $\eta$, we need to perform $O(\log(1/\eta))$ iterations of the loop. So the total complexity of Algorithm \ref{alg:algo_complet} is
\begin{equation*}
O(mn\left(\log^2(1/\eta)+\log(n)\right)).
\end{equation*}

Let us mention that, when $A$ has a special structure, there may exist fast algorithms for the multiplication by $A$ and the orthogonal projection onto $\Range(A)$. In the case of masked Fourier measurements considered in \citep{candes_li2}, for example, assuming that our convergence theorem still holds, despite the non-Gaussianity of the measurements, the complexity of each of these operations reduces to $O(m\log n)$, yielding a global complexity of
\begin{equation*}
O(m\log(n)(\log(1/\eta)+\log(n))).
\end{equation*}
The complexity is then almost linear in the number of measurements.

\begin{figure*}
\centering
\begin{tabular}{|c|c|c|}
\hline
&Alternating projections&Truncated Wirtinger flow\\\hline
Unstructured case&$O\left(mn\left(\log^2(1/\eta)+\log(n)\right)\right)$
&$O\left(mn\left(\log(1/\eta)+\log(n)\right)\right)$\\\hline
Fourier masks&$O\left(m\log(n)\left(\log(1/\eta)+\log(n)\right)\right)$
&$O\left(m\log(n)\left(\log(1/\eta)+\log(n)\right)\right)$\\\hline
\end{tabular}
\caption{Complexity of alternating projections with initialization, and truncated Wirtinger flow.\label{fig:complexity}}
\end{figure*}

As a comparison, Truncated Wirtinger flow, which is currently the most efficient known method for phase retrieval from Gaussian measurements, has an identical complexity, up to a $\log(1/\eta)$ factor in the unstructured case (see Figure \ref{fig:complexity}).

\section{Numerical experiments\label{s:numerical}}

In this section, we numerically validate Corollary \ref{cor:global_convergence}. We formulate a conjecture about the convergence of alternating projections with random initialization, in the regime $m=O(n)$.

The code used to generate Figures \ref{fig:with_init} and \ref{fig:without_init} is available at
\begin{center}
\url{http://www-math.mit.edu/~waldspur/code/alternating_projections_code.zip}.
\end{center}

\subsection{Alternating projections with careful initialization}

Corollary \ref{cor:global_convergence} states that alternating projections succeed with high probability, when they start from a good initial point, in the regime where the number of measurements is linear in the problem dimension ($m=O(n)$).

We use the initialization method described in \citep{candes_wirtinger2}, as presented in Algorithm \ref{alg:algo_complet}. We run the algorithm for various choices of $n$ and $m$, $3000$ times for each choice. This allows us to compute an empirical probability of success, for each value of $(n,m)$.

The results are presented in Figure \ref{fig:with_init}. They confirm that, when $m=Cn$, for a sufficiently large constant $C>0$, the success probability can be arbitrarily close to $1$.

\begin{figure}
  \centering
  \newlength\figureheight 
  \newlength\figurewidth 
  \setlength\figureheight{6cm} 
  \setlength\figurewidth{8cm}
%
%
\begin{tikzpicture}

\begin{axis}[%
width=0.662\figurewidth,
height=\figureheight,
at={(0\figurewidth,0\figureheight)},
scale only axis,
point meta min=0,
point meta max=1,
axis on top,
separate axis lines,
every outer x axis line/.append style={black},
every x tick label/.append style={font=\color{black}},
xmin=0.5,
xmax=15.5,
xtick={1,3,5,7,9,11,13,15},
xticklabels={{2},{6},{10},{14},{18},{22},{26},{30}},
xlabel={Signal size $n$},
every outer y axis line/.append style={black},
every y tick label/.append style={font=\color{black}},
y dir=reverse,
ymin=0.5,
ymax=17.5,
ytick={2,4,6,8,10,12,14,16},
yticklabels={{92},{80},{68},{56},{44},{32},{20},{8}},
ylabel={Number of measurements $m$},
axis background/.style={fill=white}
]
\addplot [forget plot] graphics [xmin=0.5,xmax=15.5,ymin=0.5,ymax=17.5] {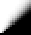};
\addplot [color=red,solid,line width=2.0pt,forget plot]
  table[row sep=crcr]{%
0.5	16.8333333333333\\
15.5	1.83333333333333\\
};
\end{axis}
\end{tikzpicture}%
  \caption{Probability of success for Algorithm \ref{alg:algo_complet}, as a function of $n$ and $m$. Black points indicate a probability equal to $0$, and white points a probability equal to $1$. The red line serves as a reference: it is the line $m=3n$.
    \label{fig:with_init}}
\end{figure}

\subsection{Alternating projections without careful initialization}

In a second experiment, we measure the probability that alternating projections succeed, when started from a random initial point (sampled from the unit sphere with uniform probability).

As previously, we compute this probability, for any pair $(m,n)$, by averaging the results of $3000$ tests. The results are presented in Figure \ref{fig:without_init}. From this figure, it seems that alternating projections behave similarly with and without a careful initialization procedure: they converge towards the correct solution as soon as $m\geq Cn$, for a suitable constant $C>0$. Only the precise value of $C$ depends on whether the initialization procedure is careful or not.

As a consequence, we have the following conjecture.
\begin{conj}\label{conj:convergence_random}
Let any $\epsilon>0$ be fixed. When $m\geq Cn$, for $C>0$ large enough, alternating projections with a random rotationally invariant initialization succeed with probability at least $1-\epsilon$.
\end{conj}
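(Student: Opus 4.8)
The plan is to reduce the conjecture to the local contraction already established in Theorem~\ref{thm:local_convergence}: it suffices to prove that, from a random rotationally invariant start, the iterates $(z_k)$ enter the $\epsilon\|x_0\|$-ball around some $e^{i\phi}x_0$ after $O(\log n)$ steps with probability at least $1-\epsilon$, since linear convergence then takes over automatically. By rotational invariance of both the initialization and the Gaussian law \eqref{eq:def_A}, I may fix $x_0$ and average over $A$. Proposition~\ref{prop:davidson} lets me replace $A^\dagger$ by $\tfrac1m A^*$ up to a factor arbitrarily close to $1$ when $m\geq Mn$, so the iteration is, to leading order, $z_{k+1}\approx \tfrac1m A^*\bigl(b\odot\phase(Az_k)\bigr)$. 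The natural progress measure is the alignment $\rho_k=|\scal{z_k}{x_0}|/(\|z_k\|\,\|x_0\|)$, equivalently the smallness of the ratio $|\mu_x/\lambda_x|$ from the decomposition \eqref{eq:Ax_orth} used in the local proof; the goal is to show $\rho_k$ increases until the hypothesis of Theorem~\ref{thm:local_convergence} is met.

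First I would analyze the population dynamics, replacing the empirical average over the rows $a_i$ by the expectation over a single Gaussian vector $a$. Writing $z=\lambda\,x_0/\|x_0\|+\mu\,w$ with $w\perp x_0$, the quantities $a^*x_0$ and $a^*z$ are jointly complex Gaussian, so $\E_a[\bar a\,|a^*x_0|\,\phase(a^*z)]$ reduces to a two-dimensional Gaussian integral whose value lies in $\mathrm{span}(x_0,z)$ and depends only on $\rho$. A direct computation should show that this population map amplifies the $x_0$-component relative to the orthogonal one, so that the population alignment strictly increases with a quantitative expansion factor bounded away from $1$, except on the set $\{z\perp x_0\}$, which is a fixed ``bad'' manifold (the population analogue of a spurious stagnation point in Proposition~\ref{prop:stagnation_points}). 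A random rotationally invariant $z_0$ satisfies $\rho_0\gtrsim 1/\sqrt n$ with probability $\geq 1-\epsilon/2$ by standard anti-concentration, and the expansion factor then drives $\rho_k$ from $1/\sqrt n$ to $\Omega(1)$, and past $1-\epsilon$, in $O(\log n)$ steps.

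To make this rigorous at finite $m$, I would establish uniform concentration of the empirical one-step map around its population counterpart along the trajectory, for $m\geq Mn$: the Gaussian tails together with a net/chaining argument over the relevant low-dimensional set should give an additive error $o(1)$, the only delicate point being the non-smoothness of $\phase$ near the zeros of $Az_k$, which can be controlled as in Lemmas~\ref{lem:first_term} and~\ref{lem:second_term}. I expect this route to yield at best $m\geq Cn\,\mathrm{polylog}(n)$, matching the regime of \citep{sun_qu_wright}, and that sharpening it to the clean $m\geq Cn$ of the conjecture would require a more global argument.

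The main obstacle is precisely the step that has no gradient-descent analogue. For Wirtinger flow the global analysis of \citep{sun_qu_wright} rests on a smooth cost function whose Hessian can be uniformly controlled, yielding a strict-saddle property that random initialization provably escapes. The alternating projections map $G:z\mapsto A^\dagger(b\odot\phase(Az))$ is neither a gradient nor smooth, and its Jacobian $DG$ at a fixed point is not symmetric, so there is no variational characterization of the stagnation points and no notion of negative curvature to exploit. Proposition~\ref{prop:stagnation_points} only gives a \emph{necessary} condition for a stagnation point, not an enumeration; to run a saddle-avoidance argument one would have to prove that \emph{every} non-global stagnation point is linearly unstable (spectral radius of $DG$ strictly above $1$ in some direction), uniformly and with high probability over $A$, and then invoke a stable-manifold theorem for general maps to conclude that the set of bad initializations has measure zero. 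Even granting instability, converting such a measure-zero statement into the quantitative bound $1-\epsilon$ with an explicit linear scaling $m\geq Cn$, uniformly over the random measurement matrix, is exactly what the present techniques cannot deliver; ruling out \emph{attracting} spurious stagnation points for $m\gtrsim n$ is the crux, and it is this gap that keeps the statement a conjecture.
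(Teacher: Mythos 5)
This statement is Conjecture \ref{conj:convergence_random}: the paper does not prove it. It offers only numerical evidence (Figure \ref{fig:without_init}) together with the remark that a proof would require estimating the volume of the attraction basins of spurious stagnation points. Your submission is likewise not a proof --- your final paragraph concedes exactly this --- so the honest verdict is that you have reproduced the paper's diagnosis of why the statement is open rather than closed the gap. That diagnosis is accurate and matches the paper's own discussion, including the footnote that the technique of \citep{sun_qu_wright} collapses for alternating projections because the map $z\mapsto A^\dag(b\odot\phase(Az))$ is neither a gradient nor smooth, and Proposition \ref{prop:stagnation_points} gives only a necessary condition on stagnation points, not an enumeration or a stability classification.

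Taken as an attempted proof, the constructive middle part of your program fails at two concrete places. First, the signal-to-noise count: from a rotationally invariant $z_0$ the alignment is $\rho_0\sim 1/\sqrt n$, so the useful component of the population map is of size $\sim \|x_0\|/\sqrt n$, whereas the uniform deviation of the empirical one-step map from its population counterpart, over any set rich enough to contain the trajectory, is of order $\sqrt{n/m}\,\|x_0\|$ --- the best that a net or chaining argument at the level of Lemmas \ref{lem:first_term} and \ref{lem:second_term} can deliver. Requiring the signal to dominate the noise then forces $m\gtrsim n^2$, not $m\geq Cn$ and not even $m\geq Cn\,\mathrm{polylog}(n)$; escaping this barrier would need a trajectory-dependent (leave-one-out) analysis that nothing in the paper supplies. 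Second, and more fundamentally, the population picture you invoke --- the only bad set being the measure-zero manifold $\{z\perp x_0\}$ --- is qualitatively wrong at finite $m=O(n)$: the paper reports complementary experiments indicating that genuinely \emph{attracting} spurious stagnation points exist in this regime, so their basins have positive volume and can intersect the region $\{\rho\gtrsim 1/\sqrt n\}$. Hence the claimed monotone growth of $\rho_k$ is false as a uniform statement, and any proof must instead bound the probability that a random initialization lands in one of these basins --- which is precisely the difficulty the paper identifies as the reason the statement remains a conjecture.
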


Complementary experiments (not shown here), suggest that, in the regime $m=O(n)$, there are attractive stagnation points, so there are initializations for which alternating projections fail. However, it seems that these bad initializations occupy a very small volume in the space of all possible initial points. Therefore, a random initialization leads to success with high probability.
Unfortunately, proving the conjecture a priori requires to evaluate in some way the size of the attraction basin of stagnation points, which seems difficult.

\begin{figure}
	\centering
	\setlength\figureheight{6cm} 
	\setlength\figurewidth{8cm}
%
%
\begin{tikzpicture}

\begin{axis}[%
width=0.662\figurewidth,
height=\figureheight,
at={(0\figurewidth,0\figureheight)},
scale only axis,
point meta min=0,
point meta max=1,
axis on top,
separate axis lines,
every outer x axis line/.append style={black},
every x tick label/.append style={font=\color{black}},
xmin=0.5,
xmax=15.5,
xtick={1,3,5,7,9,11,13,15},
xticklabels={{2},{6},{10},{14},{18},{22},{26},{30}},
xlabel={Signal size $n$},
every outer y axis line/.append style={black},
every y tick label/.append style={font=\color{black}},
y dir=reverse,
ymin=0.5,
ymax=17.5,
ytick={2,4,6,8,10,12,14,16},
yticklabels={{152},{132},{112},{92},{72},{52},{32},{12}},
ylabel={Number of measurements $m$},
axis background/.style={fill=white}
]
\addplot [forget plot] graphics [xmin=0.5,xmax=15.5,ymin=0.5,ymax=17.5] {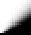};
\addplot [color=red,solid,line width=2.0pt,forget plot]
  table[row sep=crcr]{%
0.5	16.9\\
15.5	7.9\\
};
\end{axis}
\end{tikzpicture}%
	\caption{Probability of success for alternating projections with a random Gaussian initialization, as a function of $n$ and $m$. Black points indicate a probability equal to $0$, and white points a probability equal to $1$. The red line serves as a reference: it is the line $m=3n$.\label{fig:without_init}}
\end{figure}

\section*{Acknowledgments}

Part of this work has been done while the author was at École Normale Supérieure de Paris, where she has been partially supported by ERC grant InvariantClass 320959.

\appendix

\section{Proof of Proposition \ref{prop:stagnation_points}\label{s:stagnation_points}}

\begin{prop*}[Proposition \ref{prop:stagnation_points}]
For any $y_0$, the sequence $(y_k)_{k\in\N}$ is bounded. Any accumulation point $y_\infty$ of $(y_k)_{k\in\N}$ satisfies the following property:
\begin{equation*}
\exists u\in E_{\phase}(y_\infty),\quad\quad
(AA^\dag)(b\odot u)=y_\infty.
\end{equation*}
In particular, if $y_\infty$ has no zero entry,
\begin{equation*}
(AA^\dag)(b\odot \phase(y_\infty))=y_\infty.
\end{equation*}
\end{prop*}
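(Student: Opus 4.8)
The plan is to read the two update rules in \eqref{eq:gs_image} as alternating projections onto the closed modulus set $M=\{z\in\C^m:|z|=b\}$ and the subspace $V=\Range(A)$, to exploit the classical monotonicity of alternating projections, and then to pass to the limit through a compactness argument that absorbs the discontinuity of $\phase$ at the origin. For boundedness I would first note that, for every $k\geq 1$, $y_k=(AA^\dag)(b\odot\phase(y_{k-1}))$; since $AA^\dag$ is an orthogonal projection (hence $1$-Lipschitz) and $|b\odot\phase(y_{k-1})|=b$ pointwise, this gives $\|y_k\|\leq\|b\|$, so the sequence is bounded by $\max(\|y_0\|,\|b\|)$ and accumulation points exist.

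\textbf{A monotone Lyapunov function.} The engine of the proof is the quantity $g(y)=\sum_i(|y_i|-b_i)^2=\mathrm{dist}(y,M)^2$ for $y\in V$; note $g(y_k)=\|y_k-y'_k\|^2$ with $y'_k=P_M(y_k)$ and $y_{k+1}=P_V(y'_k)$. Two elementary closest-point inequalities, using $y_k\in V$ and $y'_k\in M$, give $g(y_{k+1})\leq\|y_{k+1}-y'_k\|^2\leq g(y_k)$, so $g(y_k)$ is non-increasing. Since $y_k-y_{k+1}\in V$ while $y'_k-y_{k+1}\perp V$, the Pythagorean identity $\|y_k-y'_k\|^2=\|y_k-y_{k+1}\|^2+\|y_{k+1}-y'_k\|^2$ holds, whence $g(y_k)-g(y_{k+1})\geq\|y_k-y_{k+1}\|^2$. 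Telescoping then yields $\sum_k\|y_k-y_{k+1}\|^2\leq g(y_0)<\infty$, and in particular $\|y_k-y_{k+1}\|\to 0$, which is the displacement bound I will use below.

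\textbf{Passing to the limit.} Let $y_\infty$ be an accumulation point, with $y_{k_j}\to y_\infty$. The auxiliary points $y'_{k_j}=b\odot\phase(y_{k_j})$ all lie in the compact set $M$, so after extracting a further subsequence I may assume $y'_{k_j}\to w$ with $|w|=b$; write $w=b\odot u$ with $u$ of unit-modulus entries. Because $\|y_k-y_{k+1}\|\to 0$, we also have $y_{k_j+1}\to y_\infty$, and continuity of the linear map $P_V=AA^\dag$ gives $y_\infty=\lim P_V(y'_{k_j})=(AA^\dag)(b\odot u)$. It then remains to verify $u\in E_{\phase}(y_\infty)$: on any coordinate $i$ with $(y_\infty)_i\neq 0$, continuity of $\phase$ away from $0$ forces $(y'_{k_j})_i\to b_i\,\phase((y_\infty)_i)$, so $u_i=\phase((y_\infty)_i)$; on coordinates with $(y_\infty)_i=0$, $u_i$ is merely some unit-modulus number, which is exactly what $E_{\phase}$ allows. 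When $y_\infty$ has no zero entry, $E_{\phase}(y_\infty)=\{\phase(y_\infty)\}$ is a singleton, yielding the stated ``in particular'' identity.

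\textbf{Main obstacle.} The only genuine difficulty is precisely the discontinuity of $\phase$ (equivalently of $P_M$) on the coordinates where $y_\infty$ vanishes: one cannot simply declare $y_\infty$ a fixed point of the would-be continuous composite $P_V\circ P_M$. The compactness extraction of the limiting phase vector $u$, together with the set-valued relaxation $E_{\phase}$ on the zero coordinates, is designed exactly to sidestep this; everything else reduces to the standard closest-point and Pythagorean inequalities for projections.
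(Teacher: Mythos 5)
Your proof is correct, and it reaches the fixed-point identity by a genuinely different mechanism than the paper. The skeleton is shared: boundedness via $\|y'_k\|=\|b\|$ and the $1$-Lipschitz projection $AA^\dag$, extraction of subsequences, compactness of the modulus set (the paper's $E_b$, your $M$), continuity of the linear map $AA^\dag$, and an identical coordinate-by-coordinate treatment of the zero entries of $y_\infty$ via the set-valued $E_{\phase}$. The divergence is in how the limit of $y_{k_j+1}$ is identified with $y_\infty$. The paper uses only monotonicity of the interleaved distances $d(y'_{k-1},\Range(A))\geq d(y_k,E_b)\geq d(y'_k,\Range(A))$: these converge to a common limit $\delta$, so in the limit both $y_\infty$ and $y_\infty^{+1}:=\lim y_{k_j+1}$ lie in $\Range(A)$ at distance exactly $\delta=d(y'_\infty,\Range(A))$ from $y'_\infty$, and uniqueness of the metric projection onto the convex set $\Range(A)$ forces $y_\infty=y_\infty^{+1}$. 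You instead upgrade monotonicity to a quantitative decrease, $g(y_k)-g(y_{k+1})\geq\|y_k-y_{k+1}\|^2$, via the Pythagorean identity, telescope to obtain $\sum_k\|y_k-y_{k+1}\|^2<\infty$, and deduce asymptotic regularity $\|y_k-y_{k+1}\|\to 0$, which transfers the limit of $y_{k_j}$ to $y_{k_j+1}$ with no appeal to convexity or uniqueness of projections at all. Your route establishes a strictly stronger intermediate fact --- square-summable displacements for the whole sequence, the classical Fej\'er-type estimate --- which is of independent interest (for instance, it excludes the iterates drifting at non-vanishing speed, not just along the chosen subsequence); the paper's route is softer, needing nothing beyond the monotone-distance chain and single-valuedness of projections onto closed convex sets. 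One small point to tighten: the inequality $\|y_{k+1}-y'_k\|\leq\|y_k-y'_k\|$ and the Pythagorean identity both use $y_k\in\Range(A)$, which holds only for $k\geq 1$ since $y_0$ is arbitrary; starting the telescoping at $k=1$ (as your boundedness paragraph already implicitly does) fixes this at no cost, since accumulation points are unaffected by dropping the first term.
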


\begin{proof}[Proof of Proposition \ref{prop:stagnation_points}]
The boundedness of $(y_k)_{k\in\N}$ is a consequence of the fact that $||y'_k||=||b||$ for all $k$, so $||y_{k+1}||\leq |||AA^\dag|||\,||b||$.

Let us show the second part of the statement. Let $y_\infty$ be an accumulation point of $(y_k)_{k\in\N}$, and let $\phi:\N\to\N$ be an extraction such that
\begin{equation*}
y_{\phi(n)}\to y_\infty\quad\mbox{when}\quad n\to+\infty.
\end{equation*}
By compacity, as $(y'_{\phi(n)})_{n\in\N}$ and $(y_{\phi(n)+1})_{n\in\N}$ are bounded sequences, we can assume, even if we have to consider replace $\phi$ by a subextraction, that they also converge. We denote by $y'_\infty$ and $y_\infty^{+1}$ their limits:
\begin{equation*}
y'_{\phi(n)}\to y'_{\infty}\quad\mbox{and}\quad y_{\phi(n)+1}\to y_\infty^{+1}\quad\mbox{when }n\to+\infty.
\end{equation*}

Let us define
\begin{equation*}
E_b=\{y'\in\C^m,|y'|=b\}.
\end{equation*}
We observe that, for any $k$,
\begin{equation*}
d(y'_{k-1},\Range(A))\geq d(y_{k},E_b)\geq d(y'_k,\Range(A)).
\end{equation*}
Indeed, because the operators $y\to b\odot\phase(y)$ and $y\to(AA^\dag)y$ are projections,
\begin{equation*}
\begin{array}{rcl}
d(y'_{k-1},\Range(A))=&d(y'_{k-1},y_k)&\geq d(y_k,E_b);\\
d(y_k,E_b)=&d(y_k,y'_k)&\geq d(y'_k,\Range(A)).
\end{array}
\end{equation*}
So the sequences $(d(y_k,E_b))_{k\in\N}$ and $(d(y'_k,\Range(A)))_{k\in\N}$ converge to the same non-negative limit, that we denote by $\delta$. In particular,
\begin{align*}
d(y_\infty,E_b)=\delta=d(y'_\infty,\Range(A)).
\end{align*}
If we pass to the limit the equalities $d(y_{\phi(n)},E_b)=||y_{\phi(n)}-y'_{\phi(n)}||$ and $d(y'_{\phi(n)},\Range(A))=||y'_{\phi(n)}-y_{\phi(n)+1}||$, we get
\begin{equation*}
||y_\infty-y'_\infty||=||y'_\infty-y_\infty^{+1}||=\delta=d(y'_\infty,\Range(A)).
\end{equation*}
As $\Range(A)$ is convex, the projection of $y'_\infty$ onto it is uniquely defined. This implies
\begin{equation*}
y_\infty=y_\infty^{+1},
\end{equation*}
and, because $\forall n,y_{\phi(n)+1}=(AA^\dag) y'_{\phi(n)}$,
\begin{equation*}
y_\infty=y_\infty^{+1}=(AA^\dag)y'_\infty.
\end{equation*}
To conclude, we now have to show that $y'_\infty=b\odot u$ for some $u\in E_{\phase}(y_\infty)$. We use the fact that, for all $n$, $y'_{\phi(n)}=b\odot \phase(y_{\phi(n)})$.

For any $i\in\{1,\dots,m\}$, if $(y_\infty)_i\ne 0$, $\phase$ is continuous around $(y_\infty)_i$, so $(y'_{\infty})_i= b_i\phase((y_\infty)_i)$. We then set $u_i=\phase((y_\infty)_i)$, and we have $(y'_\infty)_i=b_iu_i$.

If $(y_\infty)_i=0$, we set $u_i=\phase((y'_\infty)_i)\in E_{\phase}(0)=E_{\phase}((y_\infty)_i)$. We then have $y'_\infty=|y'_\infty|u_i=b_iu_i$.

With this definition of $u$, we have, as claimed, $y'_\infty=b\odot u$ and $u\in E_{\phase}(y_\infty)$.

\end{proof}

\section{Technical lemmas for Section \ref{s:with_init}}

\subsection{Proof of Lemma \ref{lem:diff_phase}\label{ss:diff_phase}}

\begin{lem*}[Lemma \ref{lem:diff_phase}]
For any $z_0,z\in\C$,
\begin{equation*}
|\phase(z_0+z)-\phase(z_0)| \leq 2. 1_{|z|\geq |z_0|/6} + \frac{6}{5}\left|\Im\left(\frac{z}{z_0}\right)\right|.
\end{equation*}
\end{lem*}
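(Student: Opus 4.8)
The plan is to split on the indicator $1_{|z|\geq|z_0|/6}$ and treat the two regimes separately. When $|z|\geq|z_0|/6$, the right-hand side is already at least $2$, and since $\phase$ always takes values in the closed unit disc (it lands on the unit circle, or equals $1$ by convention when its argument vanishes), the triangle inequality gives
\begin{equation*}
|\phase(z_0+z)-\phase(z_0)|\leq|\phase(z_0+z)|+|\phase(z_0)|\leq 2.
\end{equation*}
So this case is immediate and needs no computation. I note that it also automatically covers $z_0=0$, since then $|z_0|/6=0\leq|z|$; hence in the complementary regime I may assume $z_0\neq 0$.

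In the regime $|z|<|z_0|/6$ the indicator vanishes, and the sharper bound $|\phase(z_0+z)-\phase(z_0)|\leq\frac{6}{5}|\Im(z/z_0)|$ must be proved. The key reduction is to normalize by $z_0$: writing $w=z/z_0$ one has $z_0+z=z_0(1+w)$ with $|1+w|\geq 1-|w|>5/6>0$, and $\phase$ is equivariant under multiplication by nonzero complex numbers, i.e. $\phase(z_0 u)=\phase(z_0)\phase(u)$ for $u\neq 0$. Applying this with $u=1+w$ gives
\begin{equation*}
\phase(z_0+z)-\phase(z_0)=\phase(z_0)\bigl(\phase(1+w)-1\bigr).
\end{equation*}
Taking moduli and using $|\phase(z_0)|=1$ reduces the claim to the single-variable inequality $|\phase(1+w)-1|\leq\frac{6}{5}|\Im(w)|$ for $|w|<1/6$, where $\Im(w)=\Im(z/z_0)$.

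Finally, I would establish this last inequality by passing to polar form. Writing $1+w=\rho e^{i\theta}$ with $\rho>0$ and $\theta=\arg(1+w)\in\left]-\pi/2;\pi/2\right[$, one has $\phase(1+w)=e^{i\theta}$ and $|\phase(1+w)-1|=|e^{i\theta}-1|=2|\sin(\theta/2)|\leq|\theta|$. Setting $a=\Re(w)$ and $b=\Im(w)$, the bound $|w|<1/6$ forces $1+a\geq 1-|w|>5/6$, so $\theta=\arctan\bigl(b/(1+a)\bigr)$ and, by $|\arctan(x)|\leq|x|$,
\begin{equation*}
|\theta|\leq\frac{|b|}{1+a}\leq\frac{6}{5}|b|=\frac{6}{5}|\Im(w)|,
\end{equation*}
which closes the estimate. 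The argument is entirely elementary; the only point demanding care is the constant bookkeeping in the second case — checking that $|w|<1/6$ forces $1+\Re(w)>5/6$, which is precisely what produces the factor $6/5$, and confirming that both $z_0$ and $z_0+z$ are nonzero there so that all phases are given by the division formula rather than the convention $\phase(0)=1$. I expect no genuine obstacle beyond this.
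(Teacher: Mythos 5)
Your proof is correct and follows essentially the same route as the paper's: dispose of the case $|z|\geq|z_0|/6$ (and $z_0=0$) via the trivial bound $2$, normalize by $z_0$ to reduce to $|\phase(1+w)-1|$, and bound $2|\sin(\theta/2)|$ by $|\Im(w)|/\Re(1+w)\leq\frac{6}{5}|\Im(w)|$. The only cosmetic difference is that you pass through $|\theta|=|\arctan(\cdot)|\leq|\cdot|$ where the paper uses $2|\sin(\theta/2)|\leq|\tan\theta|$ directly; the constant bookkeeping is identical.
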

\begin{proof}
The inequality holds if $z_0=0$, so we can assume $z_0\ne 0$. We remark that, in this case,
\begin{equation*}
|\phase(z_0+z)-\phase(z_0)| = |\phase(1+z/z_0)-1|.
\end{equation*}
It is thus enough to prove the lemma for $z_0=1$, so we make this assumption.

When $|z|\geq 1/6$, the inequality is valid. Let us now assume that $|z|<1/6$. Let $\theta\in\left]-\frac{\pi}{2};\frac{\pi}{2}\right[$ be such that
\begin{equation*}
e^{i\theta}=\phase(1+z).
\end{equation*}
Then
\begin{align*}
|\phase(1+z)-1| &=|e^{i\theta}-1|\\
&=2|\sin(\theta/2)|\\
&\leq |\tan\theta|\\
&=\frac{|\Im(1+z)|}{|\Re(1+z)|}\\
&\leq \frac{|\Im(z)|}{1-|z|}\\
&\leq \frac{6}{5}|\Im(z)|.
\end{align*}
So the inequality is also valid.
\end{proof}

\subsection{Proof of Lemma \ref{lem:first_term}\label{ss:first_term}}

\begin{lem*}[Lemma \ref{lem:first_term}]
For any $\eta>0$, there exists $C_1,C_2,M,\gamma>0$ such that the inequality
\begin{equation*}
||\,|Ax_0|\odot 1_{|v|\geq |Ax_0|}||\leq \eta ||v||
\end{equation*}
holds for any $v\in\Range(A)$ such that $||v||<\gamma ||Ax_0||$, with probability at least
\begin{equation*}
1-C_1\exp(-C_2m),
\end{equation*}
when $m\geq Mn$.
\end{lem*}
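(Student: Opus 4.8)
The plan is to establish the lemma by the standard recipe for uniform control of an indicator-type sum over a low-dimensional subspace: a pointwise first-moment estimate, pointwise concentration, and a net argument, with the bulk of the care devoted to the discontinuity of the indicator. By homogeneity I may normalize $||x_0||=1$ and write $w=Ax_0$; Proposition \ref{prop:davidson} (with $m\geq Mn$, $M$ large) gives $||w||\asymp\sqrt m$ and $||Au||\asymp\sqrt m\,||u||$ for all $u$, on an event of probability $1-2\exp(-C_2m)$. Squaring the target inequality, it suffices to bound $F(v):=\sum_{i=1}^m|w_i|^2\,1_{|v_i|\geq|w_i|}$ and show $F(v)\leq\eta^2||v||^2$ for every $v\in\Range(A)$ with $||v||<\gamma||w||$.

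First I would fix $v=Au$ and estimate $\E F(v)$. Writing $u=\beta x_0+u^\perp$ with $u^\perp\perp x_0$, the pairs $(w_i,v_i)$ are i.i.d.\ jointly complex Gaussian with $v_i=\beta w_i+\tilde v_i$, where $\tilde v_i\sim\mathcal{CN}(0,||u^\perp||^2)$ is independent of $w_i$. A direct Gaussian integral gives $\E[\,|w_i|^2 1_{|v_i|\geq|w_i|}\,]=O(||u||^4)$ — the crucial point being that this is \emph{quadratic} in the small parameter $||u||^2$, not linear, because the event $\{|v_i|\geq|w_i|\}$ is itself rare (probability $O(||u||^2)$) when $||u||$ is small. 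Summing, $\E F(v)=O(m\,||u||^4)=O(||v||^4/m)\leq C\gamma^2||v||^2$, so taking $\gamma\lesssim\eta$ makes the mean at most $\tfrac12\eta^2||v||^2$. Next, since $X_i=|w_i|^2 1_{|v_i|\geq|w_i|}\leq|v_i|^2$ are independent sub-exponential variables with parameter $O(||u||^2)$, Bernstein's inequality yields $F(v)\leq\eta^2||v||^2$ for this fixed $v$ with failure probability at most $\exp(-c\eta^2 m)$.

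It remains to upgrade this to a uniform statement, which is where the real work lies. I would cover $\{v\in\Range(A):||v||\leq\gamma||w||\}$ — an $n$-complex-dimensional set — by an $\epsilon$-net $\mathcal N$ of cardinality $\exp(O(n\log(1/\epsilon)))$ and union bound the concentration estimate over $\mathcal N$; since $m\geq Mn$ with $M$ large (depending on $\eta$), the total failure probability stays $C_1\exp(-C_2m)$. \textbf{The main obstacle is the discontinuity of $F$ in $v$:} the indicator $1_{|v_i|\geq|w_i|}$ cannot be controlled coordinatewise under an $\ell_2$-perturbation, so the bound does not transfer directly from a net point $v_0$ to nearby $v$. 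I would resolve this through the sandwich
\begin{equation*}
1_{|v_i|\geq|w_i|}\leq 1_{|(v_0)_i|\geq|w_i|/2}+1_{|v_i-(v_0)_i|\geq|w_i|/2},
\end{equation*}
valid because $|v_i|\geq|w_i|$ forces either $|(v_0)_i|\geq|w_i|/2$ or $|v_i-(v_0)_i|\geq|w_i|/2$. This splits $F(v)$ into a term of the same form at the net point $v_0$ but with halved threshold (handled by the moment and concentration steps above, whose estimates are unaffected by the constant factor) plus a remainder $\sum_i|w_i|^2 1_{|\delta_i|\geq|w_i|/2}$ with $\delta=v-v_0$; on the active coordinates of the latter $|w_i|\leq 2|\delta_i|$, so it is \emph{deterministically} at most $4||\delta||^2\leq 4\epsilon^2$.

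Finally I would choose $\epsilon$ a small multiple of $\eta||v||$ and treat the range of $||v||$ through $O(m)$ dyadic shells $||v||\asymp 2^{-j}\gamma\sqrt m$. On each shell the net size is $(C/\eta)^{2n}$ and the per-point concentration failure is $\exp(-c\eta^2 m)$, both independent of $j$, so each shell costs $\exp(O(n\log(1/\eta))-c\eta^2 m)$; below an exponentially small radius $F(v)$ vanishes with high probability (no coordinate $|w_i|$ is that small), so the polynomial-in-$m$ number of shells is harmlessly absorbed by the exponential decay, and summing over shells keeps the overall bound $C_1\exp(-C_2m)$. I expect the expectation and concentration computations to be routine Gaussian estimates, and the discontinuity handling via the sandwich and the dyadic scaling to be the only genuinely delicate point.
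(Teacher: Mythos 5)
Your proof is correct, but it takes a genuinely different route from the paper's. The paper argues combinatorially, through the bad set $S_v=\{i:|v_i|\geq|Ax_0|_i\}$: Lemma \ref{lem:S_geq_bm} (proved with the projection estimate of Lemma \ref{lem:dasgupta} plus a union bound over the $\binom{m}{\lceil\beta m\rceil}$ subsets) shows that $||\,|Ax_0|\odot 1_S||\geq\beta^{3/2}e^{-1/2}||Ax_0||$ whenever $\Card(S)\geq\beta m$, which forces $\Card(S_v)<\beta m$ as soon as $||v||<\gamma||Ax_0||$ with $\gamma=\beta^{3/2}e^{-1/2}$; Lemma \ref{lem:S_leq_bm} (Proposition \ref{prop:davidson} applied to all submatrices $A_S$, again union-bounded over subsets) shows that no $y\in\Range(A)$ can put more than a $10\sqrt{\beta\log(1/\beta)}$ fraction of its norm on a set of cardinality $<\beta m$; the conclusion then follows from the pointwise inequality $|Ax_0|_i\leq|v_i|$ on $S_v$. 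Because the only dependence on $v$ is through $S_v$, uniformity in $v$ --- including over all scales of $||v||$ --- is automatic: the paper never confronts the discontinuity of the indicator and needs neither nets nor shells; its entropy cost is the $\exp(O(\beta\log(1/\beta)m))$ count of subsets, paid against concentration at rate $\exp(-cm)$. You instead pay entropy in $u$-space nets, $\exp(O(n\log(1/\eta)))$ points per dyadic shell, against Bernstein at rate $\exp(-c_\eta m)$; this is precisely what forces you to handle the discontinuity by hand (your sandwich inequality does so correctly, and its remainder term is indeed deterministic, since $|w_i|\leq 2|\delta_i|$ on its active coordinates gives the bound $4||\delta||^2$) and to treat scale explicitly (some shell or scaling argument is genuinely needed in your framework, as $F$ is not homogeneous in $v$). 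Your quadratic moment bound $\E[|w_i|^2 1_{|v_i|\geq|w_i|}]=O(||u||^4)$ is the analytic counterpart of the paper's ``$S_v$ is small'' step, and arguably explains more transparently why the lemma is true. Two details to tighten in a full write-up: the net must be chosen deterministically in $u$-space and transported by $A$ via Proposition \ref{prop:davidson} (a net of the random set $\Range(A)\cap\{||v||\leq\gamma||Ax_0||\}$ would not be independent of $A$, which the per-point Bernstein bound requires --- your own moment computation, which fixes $u$ and averages over $A$, already points to the right parametrization); and the per-point Bernstein exponent is $\exp(-c\eta^4m)$ (the minimum of the two Bernstein regimes) rather than $\exp(-c\eta^2m)$, which is immaterial since $\eta$ is a fixed constant.
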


\begin{proof}
For any $S\subset\{1,\dots,m\}$, we denote by $1_S$ the vector of $\C^m$ such that
\begin{align*}
(1_S)_j&=1\mbox{ if }j\in S\\
&=0\mbox{ if }j\notin S.
\end{align*}
We use the following two lemmas, proven in Paragraphs \ref{sss:S_geq_bm} and \ref{sss:S_leq_bm}.

\begin{lem}\label{lem:S_geq_bm}
Let $\beta\in]0;1/2[$ be fixed. There exist $C_1>0$ such that, with probability at least
\begin{equation*}
1-C_1\exp(-\beta^3m/e),
\end{equation*}
the following property holds: for any $S\subset\{1,\dots,m\}$ such that $\Card(S)\geq\beta m$,
\begin{equation}\label{eq:Ax01S}
||\,|Ax_0|\odot 1_S|| \geq \beta^{3/2}e^{-1/2}||Ax_0||.
\end{equation}
\end{lem}

\begin{lem}\label{lem:S_leq_bm}
Let $\beta\in\left]0;\frac{1}{100}\right]$ be fixed. There exist $M,C_1,C_2>0$ such that, if $m\geq M n$, then, with probability at least
\begin{equation*}
1-C_1\exp(-C_2 m),
\end{equation*}
the following property holds: for any $S\subset\{1,\dots,m\}$ such that $\Card (S)<\beta m$ and for any $y\in\Range(A)$,
\begin{equation}\label{eq:y1S}
||y\odot 1_S||\leq 10\sqrt{\beta\log(1/\beta)}||y||.
\end{equation}
\end{lem}

Let $\beta>0$ be such that $10\sqrt{\beta\log(1/\beta)}\leq \eta$. Let $M$ be as in Lemma \ref{lem:S_leq_bm}. We set
\begin{equation*}
\gamma = \beta^{3/2}e^{-1/2}.
\end{equation*}
We assume that Equations \eqref{eq:Ax01S} and \eqref{eq:y1S} hold; from the lemmas, this occurs with probability at least
\begin{equation*}
1-C_1'\exp(-C_2'm),
\end{equation*}
for some constants $C_1',C_2'>0$, provided that $m\geq Mn$.

On this event, for any $v\in\Range(A)$ such that $||v||<\gamma ||Ax_0||$, if we set $S_v=\{i\mbox{ s.t. }|v_i|\geq|Ax_0|_i\}$, we have that
\begin{equation*}
\Card S_v < \beta m.
\end{equation*}
Indeed, if it was not the case, we would have, by Equation \eqref{eq:Ax01S},
\begin{align*}
||v||&\geq ||v\odot 1_{S_v}||\\
&\geq ||\,|Ax_0|\odot 1_{S_v}||\\
&\geq \beta^{3/2}e^{-1/2}||Ax_0||\\
&=\gamma ||Ax_0||,
\end{align*}
which is in contradiction with the way we have chosen $v$.

So we can apply Equation \eqref{eq:y1S}, and we get
\begin{align*}
||\,|Ax_0|\odot 1_{|v|\geq |Ax_0|}||
&\leq ||v\odot 1_S||\\
&\leq 10\sqrt{\beta\log(1/\beta)}||v||\\
&\leq \eta ||v||.
\end{align*}

\end{proof}

\subsubsection{Proof of Lemma \ref{lem:S_geq_bm}\label{sss:S_geq_bm}}

\begin{proof}[Proof of Lemma \ref{lem:S_geq_bm}]
If we choose $C_1$ large enough, it is enough to show the property for $m$ larger than some fixed constant.

We first assume $S$ fixed, with cardinality $\Card S\geq\beta m$. We use the following lemma.
\begin{lem}[\citep{dasgupta}, Lemma 2.2]\label{lem:dasgupta}
Let $k_1<k_2$ be natural numbers. Let $X\in\C^{k_2}$ be a random vector whose coordinates are independent, Gaussian, of variance $1$. Let $Y$ be the projection of $X$ onto its $k_1$ first coordinates. Then, for any $t>0$,
\begin{align*}
\mbox{\rm Proba}\left(\frac{||Y||}{||X||}\leq \sqrt{\frac{t k_1}{k_2}}\right)
&\leq \exp\left(k_1(1-t+\log t)\right)&\mbox{if }t<1;\\
\mbox{\rm Proba}\left(\frac{||Y||}{||X||}\geq \sqrt{\frac{t k_1}{k_2}}\right)
&\leq \exp\left(k_1(1-t+\log t)\right)&\mbox{if }t>1.
\end{align*}
\end{lem}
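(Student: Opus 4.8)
The plan is to prove this directly by the exponential-moment (Chernoff) method, exploiting that a coordinate projection turns the ratio into a comparison of two independent gamma variables. First I would record the distributional facts. Writing $X=(Y,Z)$, where $Y$ collects the first $k_1$ coordinates and $Z$ the remaining $k_2-k_1$, the vectors $Y$ and $Z$ are independent; and since each coordinate is a complex Gaussian of unit variance, each $|X_i|^2$ is exponential with mean $1$. Hence $S_1:=||Y||^2$ and $S_2:=||Z||^2$ are independent gamma variables with $\E[e^{sS_1}]=(1-s)^{-k_1}$ and $\E[e^{sS_2}]=(1-s)^{-(k_2-k_1)}$, valid for $s<1$. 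It is precisely this complex convention — equivalently, viewing $\C^{k_2}$ as $\R^{2k_2}$, which doubles the effective real dimension — that will produce the exponent $k_1$ rather than $k_1/2$. Note also that the ratio $||Y||/||X||$ is scale invariant, so no normalization is needed.

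For the lower tail ($t<1$) I would set $p=k_1/k_2$ and $c=tp\in(0,1)$, and observe that $\{||Y||/||X||\le\sqrt c\}=\{S_1\le c(S_1+S_2)\}=\{(c-1)S_1+cS_2\ge0\}$. Applying Markov's inequality to $e^{\lambda W}$ with $W=(c-1)S_1+cS_2$ and $\lambda>0$, and factoring the expectation by independence, gives
\[
P\left(\frac{||Y||}{||X||}\le\sqrt c\right)\le (1+\lambda(1-c))^{-k_1}(1-\lambda c)^{-(k_2-k_1)}
\]
for $0<\lambda<1/c$. The crucial step is the choice $\lambda=\frac{1-t}{t(1-c)}$, which makes the first factor exactly $t^{k_1}$ and, after substituting $c=tp$, gives $1-\lambda c=\frac{1-p}{1-tp}$, hence a second factor $\left(\frac{1-tp}{1-p}\right)^{(1-p)k_2}$. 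Writing $1-tp=(1-p)+p(1-t)$ and using $\log(1+x)\le x$ bounds the second factor by $e^{p(1-t)k_2}=e^{k_1(1-t)}$; multiplying the two factors yields $t^{k_1}e^{k_1(1-t)}=\exp(k_1(1-t+\log t))$, as claimed.

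For the upper tail ($t>1$) I would run the symmetric argument: bound $P(S_1\ge c(S_1+S_2))$ by Chernoff applied to $(1-c)S_1-cS_2$, again choosing the parameter so that the $S_1$-factor collapses to $t^{k_1}$ and controlling the $S_2$-factor by $\log(1+x)\le x$. This produces the identical bound $\exp(k_1(1-t+\log t))$, the exponent $1-t+\log t$ being negative on both sides of $t=1$.

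I expect the main obstacles to be two-fold, both algebraic rather than conceptual. First, identifying the good value of $\lambda$ and verifying the closed-form simplification $1-\lambda c=\frac{1-p}{1-tp}$, which is what lets the $(k_2-k_1)$-dependence be absorbed cleanly by $\log(1+x)\le x$. Second, keeping the variance and dimension conventions straight: it is exactly the complex (equivalently, doubled-real) normalization that accounts for the factor $k_1$ in the exponent, and it would be easy to lose a factor of $2$ here. A tidy alternative, avoiding the gamma computation altogether, would be to embed $\C^{k_2}$ in $\R^{2k_2}$, project onto the first $2k_1$ real coordinates, and invoke the real-Gaussian version of the estimate with dimensions $(2k_1,2k_2)$, which yields the same bound.
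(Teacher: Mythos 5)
Your proof is correct, but note that the paper itself contains no proof of this lemma: it is imported as a black box from the cited reference \citep{dasgupta}, whose Lemma 2.2 is stated for \emph{real} Gaussians with exponent $\frac{k}{2}(1-t+\log t)$. What you have written is therefore a self-contained substitute for that citation, and it follows essentially the same route as the reference's own proof: a Chernoff bound on a signed combination of the two independent squared norms, with the tilt parameter chosen to make one factor collapse and the other absorbable by $\log(1+x)\le x$. Your algebra checks out: with $p=k_1/k_2$, $c=tp$, $S_1\sim\mathrm{Gamma}(k_1,1)$, $S_2\sim\mathrm{Gamma}(k_2-k_1,1)$, and $\lambda=\frac{1-t}{t(1-c)}$, one indeed gets $1+\lambda(1-c)=\frac{1}{t}$ and $1-\lambda c=\frac{1-p}{1-tp}$, hence the bound $t^{k_1}e^{k_1(1-t)}=\exp\left(k_1(1-t+\log t)\right)$; the symmetric choice $\lambda=\frac{t-1}{t(1-c)}$ handles $t>1$ (where you should also remark that the case $tp\ge 1$ is trivial, the event then having probability zero since $S_2>0$ almost surely). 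Your closing alternative is in fact the cleanest way to reconcile the paper's statement with the literal statement in \citep{dasgupta}: viewing $\C^{k_2}$ as $\R^{2k_2}$, a projection onto the first $k_1$ complex coordinates is a projection onto the first $2k_1$ real coordinates, the threshold $\sqrt{tk_1/k_2}=\sqrt{t(2k_1)/(2k_2)}$ is unchanged, and the real exponent $\frac{2k_1}{2}(1-t+\log t)$ is exactly the $k_1(1-t+\log t)$ claimed here --- which confirms that the doubling you flag as the delicate normalization point is handled correctly in your gamma-MGF computation as well.
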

From this lemma, for any $t\in]0;1[$, because $Ax_0$ has independent Gaussian coordinates,
\begin{align*}
P\left(\frac{||\,|Ax_0|\odot 1_S||}{||Ax_0||}\leq\sqrt{t\beta} \right)
\leq \exp\left(-\beta m(t-1-\ln t)\right).
\end{align*}
In particular, for $t=\frac{\beta^2}{e}$,
\begin{align}
&P\left(\frac{||\,|Ax_0|\odot 1_S||}{||Ax_0||}\leq \beta^{3/2}e^{-1/2} \right)\nonumber\\
&\hskip 1.5cm \leq \exp\left(-\beta m\left(\frac{\beta^2}{e}-2\ln \beta\right)\right).
\label{eq:P_Ax01S}
\end{align}
As soon as $m$ is large enough, the number of subsets $S$ of $\{1,\dots,m\}$ with cardinality $\lceil \beta m\rceil$ satisfies
\begin{align}
\binom{m}{\lceil\beta m\rceil}
&\leq\left(\frac{em}{\lceil \beta m\rceil}\right)^{\lceil \beta m\rceil} \nonumber\\
&\leq\exp\left(2m\beta\log\frac{1}{\beta}\right).\label{eq:maj_binom}
\end{align}
(The first inequality is a classical result regarding binomial coefficients.)

We combine Equations \eqref{eq:P_Ax01S} and \eqref{eq:maj_binom}: Property \eqref{eq:Ax01S} is satisfied for any $S$ of cardinality $\lceil\beta m\rceil$ with probability at least
\begin{equation*}
1-\exp\left(-\frac{\beta^3}{e}m\right),
\end{equation*}
provided that $m$ is larger that some constant which depends on $\beta$.

If it is satisfied for any $S$ of cardinality $\lceil\beta m\rceil$, then it is satisfied for any $S$ of cardinality larger than $\beta m$, which implies the result.
\end{proof}

\subsubsection{Proof of Lemma \ref{lem:S_leq_bm}\label{sss:S_leq_bm}}

\begin{proof}[Proof of Lemma \ref{lem:S_leq_bm}]
We first assume $S$ to be fixed, of cardinality exactly $\lceil\beta m\rceil$.

Any vector $y\in\Range(A)$ is of the form $y=Av$, for some $v\in\C^n$. Inequality \eqref{eq:y1S} can then be rewritten as:
\begin{equation}\label{eq:S_leq_bm_rewritten}
||A_S v||=||\mathrm{Diag}(1_S)Av||\leq 10\sqrt{\beta\log(1/\beta)}||Av||,
\end{equation}
where $A_S$, by definition, is the submatrix obtained from $A$ by extracting the rows whose indexes are in $S$.

We apply Proposition \ref{prop:davidson} to $A$ and $A_S$, respectively for $t=\frac{1}{2}$ and $t=3\sqrt{\log(1/\beta)}$. It guarantees that the following properties hold:
\begin{gather*}
\inf_{v\in\C^n}\frac{||Av||}{||v||}\geq \sqrt{m}\left(\frac{1}{2}-\sqrt{\frac{n}{m}}\right);\\
\sup_{v\in\C^n}\frac{||A_Sv||}{||v||}\leq \sqrt{\Card S}\left(1+\sqrt\frac{n}{\Card S}+3\sqrt{\log(1/\beta)}\right),
\end{gather*}
with respective probabilities at least
\begin{gather*}
1-2\exp\left(-\frac{m}{4}\right);\\
\mbox{and }1-2\exp\left(-9(\Card S)\log(1/\beta)\right)\\
\hskip 2cm \geq 1-2\exp\left(-9\beta\log(1/\beta)m\right).
\end{gather*}
Assuming $m\geq Mn$ for some $M>0$, we deduce from these inequalities that, for all $v\in\C^n$,
\begin{align}
||A_Sv||&\leq \sqrt\frac{\Card S}{m}\left(\frac{1+\sqrt\frac{n}{\Card S}+3\sqrt{\log (1/\beta)}}{\frac{1}{2}-\sqrt{\frac{n}{m}}}\right)||Av||\nonumber\\
&\leq \sqrt{\beta+\frac{1}{m}}\left(\frac{1+\sqrt{\frac{1}{\beta M}}+3\sqrt{\log(1/\beta)}}{\frac{1}{2}-\sqrt{\frac{1}{M}}}\right)||Av||,\label{eq:tmp}
\end{align}
with probability at least
\begin{equation*}
1-2\exp\left(-9\beta\log(1/\beta)m\right)-2\exp\left(-\frac{m}{4}\right).
\end{equation*}
If we choose $M$ large enough, we can upper bound Equation \eqref{eq:tmp} by $(\epsilon+2\sqrt{\beta}(1+3\sqrt{\log(1/\beta)}))||Av||\leq (\epsilon+8\sqrt{\beta}\sqrt{\log(1/\beta)})$ for any fixed $\epsilon>0$. So this inequality implies Equation \eqref{eq:S_leq_bm_rewritten}.

As in the proof of Lemma \ref{lem:S_geq_bm}, there are at most
\begin{equation*}
\exp\left(2m\beta\log\frac{1}{\beta}\right)
\end{equation*}
subsets of $\{1,\dots,m\}$ with cardinality $\lceil\beta m\rceil$, as soon as $m$ is large enough. As a consequence, Equation \eqref{eq:S_leq_bm_rewritten} holds for any $v\in\C^n$ and $S$ of cardinality $\lceil\beta m\rceil$ with probability at least
\begin{equation*}
1-2\exp\left(-7\beta\log(1/\beta)m\right)-2\exp\left(-\left(\frac{1}{4}-2\beta\log\frac{1}{\beta}\right)m\right).
\end{equation*}
When $\beta\leq \frac{1}{100}$, we have
\begin{equation*}
\frac{1}{4}-2\beta\log\frac{1}{\beta}> 0,
\end{equation*}
so the resulting probability is larger than
\begin{equation*}
1-C_1\exp(-C_2 m),
\end{equation*}
for some well-chosen constants $C_1,C_2>0$.

This ends the proof. Indeed, if Equation \eqref{eq:S_leq_bm_rewritten} holds for any set of cardinality $\lceil\beta m\rceil$, it also holds for any set of cardinality $\Card S<\beta m$, because $||A_{S'} v||\leq ||A_S v||$ whenever $S'\subset S$. This implies Equation \eqref{eq:y1S}.
\end{proof}

\subsection{Proof of Lemma \ref{lem:second_term}\label{ss:second_term}}

\begin{lem*}[Lemma \ref{lem:second_term}]
For $M,C_1>0$ large enough, and $C_2>0$ small enough, when $m\geq M n$, the property
\begin{equation}\label{eq:second_term}
||\Im(v\odot\overline{\phase(Ax_0)})||\leq \frac{4}{5}||v||
\end{equation}
holds for any $v\in\Range(A)\cap \{Ax_0\}^\perp$, with probability at least
\begin{equation*}
1-C_1\exp(-C_2 m).
\end{equation*}
\end{lem*}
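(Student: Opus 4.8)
The plan is to reduce the statement to a bound on $v=Aw$ with $w$ genuinely orthogonal to $x_0$, where a clean independence structure appears, and then to prove that bound by concentration plus a net argument.

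First I would exploit the algebraic identity $\Im\big((Ax_0)\odot\overline{\phase(Ax_0)}\big)=0$, which holds because $(Ax_0)_i\,\overline{\phase((Ax_0)_i)}=|(Ax_0)_i|\in\R$. Writing an arbitrary $v\in\Range(A)\cap\{Ax_0\}^\perp$ as $v=Aw$ (possible since $A$ is injective with probability $1$) and decomposing $w=\alpha x_0+w_\perp$ with $w_\perp\perp x_0$, the constraint $v\perp Ax_0$ forces $\alpha=-\scal{Ax_0}{Aw_\perp}/\|Ax_0\|^2$. Since $A^*A$ is close to $m\Id$ by Proposition \ref{prop:davidson}, one has $|\scal{Ax_0}{Aw_\perp}|=|x_0^*A^*Aw_\perp|=O(\sqrt{mn})\|x_0\|\,\|w_\perp\|$, so $|\alpha|=O(\sqrt{n/m})\,\|w_\perp\|/\|x_0\|$ is small and $\|v\|\geq(1-O(n/m))^{1/2}\|Aw_\perp\|$. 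Because the phase-weighted imaginary part kills the $Ax_0$ component, $\Im(v\odot\overline{\phase(Ax_0)})=\Im(Aw_\perp\odot\overline{\phase(Ax_0)})$. Hence it suffices to prove, uniformly over $w\perp x_0$, that $\|\Im(Aw\odot\overline{\phase(Ax_0)})\|\leq c\,\|Aw\|$ for some constant $c<4/5$ with a little room; for $M$ large the factor $(1-O(n/m))^{-1/2}$ then brings this back below $4/5$.

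Next I would set up the independence that is the heart of the argument. For $w\perp x_0$ the scalars $a_i^*w$ and $a_i^*x_0$ are jointly circularly-symmetric complex Gaussian with zero covariance, hence independent; consequently the phases $\psi_i\overset{def}{=}\phase((Ax_0)_i)$ are independent of the entries $(Aw)_i=a_i^*w$. The cleanest way to use this is to condition on $(a_i^*x_0)_i$, which fixes every $\psi_i$ to a deterministic unit-modulus number while leaving $w\mapsto (a_i^*w)_i$, for $w$ ranging over $x_0^\perp$, distributed as a fresh standard complex Gaussian operator on the $(n-1)$-dimensional space $x_0^\perp$. For any fixed unit $\psi_i$, the rotation invariance of the complex Gaussian gives $\E\,\Im\big((Aw)_i\overline{\psi_i}\big)^2=\tfrac12\E\,|(Aw)_i|^2$, so that $\E\|\Im(Aw\odot\overline\psi)\|^2=\tfrac12\E\|Aw\|^2$. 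Since $\tfrac12<(4/5)^2=16/25$, there is a genuine gap to exploit. Then I would prove the uniform bound by a net argument on the unit sphere of $x_0^\perp$ (real dimension $2(n-1)$), working conditionally on the phases: for a fixed $w$, $\|\Im(Aw\odot\overline\psi)\|^2$ and $\|Aw\|^2$ are sums of $m$ independent sub-exponential terms, so Bernstein's inequality yields $\|\Im(Aw\odot\overline\psi)\|^2\leq(\tfrac12+\epsilon)\|Aw\|^2$ with probability $1-C_1e^{-C_2 m}$; a $1/4$-net has cardinality $e^{O(n)}$, the relevant quadratic forms are Lipschitz on the sphere with constant $O(|||A|||^2)=O(m)$, and since $m\geq Mn$ with $M$ large the factor $e^{-C_2 m}$ dominates the union-bound factor $e^{O(n)}$. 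As the conditional probability is the same for every phase vector, the unconditional statement follows.

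I expect the main obstacle to be exactly this last uniformity step, and in particular the temptation to bound numerator and denominator separately: pointwise one only has $\Im(\cdot)^2\leq|\cdot|^2$, which would give the useless constant $c=1$. The gain from $1$ to $4/5$ comes solely from the fact that the worst-case directions for $\|\Im(Aw\odot\overline\psi)\|$ and the best-case directions for $\|Aw\|$ do not coincide, so one must control the ratio, equivalently show that the quadratic form $\|\Im(Aw\odot\overline\psi)\|^2-c^2\|Aw\|^2$ is negative semidefinite on $x_0^\perp$, rather than the two norms individually. The dependence of $\psi$ on $A$, which could otherwise obstruct concentration, is neutralized once and for all by conditioning on $(a_i^*x_0)_i$.
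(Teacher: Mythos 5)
Your approach is, at its core, the same as the paper's. Both proofs hinge on the observation that, once the phases $\phase(Ax_0)$ are fixed (you do this by conditioning on $(a_i^*x_0)_i$; the paper does it by rotating $x_0$ to $e_1$ and noting that $C=\mathrm{Diag}(\overline{\phase(A_1)})A_{2:n}$ is again i.i.d.\ complex Gaussian), the map $w\mapsto\Im(Aw\odot\overline{\phase(Ax_0)})$ restricted to $x_0^\perp$ is an $m\times 2(n-1)$ real Gaussian matrix with entries of variance $1/2$, whose operator norm is $\approx\sqrt{m/2}$; this variance halving is exactly where the decisive constant $1/\sqrt{2}<4/5$ comes from in both arguments. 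The differences are implementational: the paper cites Proposition \ref{prop:davidson} (\citep{davidson}, Thm II.13) for the uniform operator-norm bound, where you re-derive such a bound by hand via a net-plus-Bernstein argument, and both proofs handle the mismatch between $\Range(A)\cap\{Ax_0\}^\perp$ and $A(x_0^\perp)$ by a small-projection estimate.

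That said, two steps of your write-up are wrong as literally stated, though both are fixable within your own framework. First, the identity $\Im(v\odot\overline{\phase(Ax_0)})=\Im(Aw_\perp\odot\overline{\phase(Ax_0)})$ is false: the coefficient $\alpha$ in $w=\alpha x_0+w_\perp$ is \emph{complex}, and $\Im\bigl(\alpha\,(Ax_0)_i\,\overline{\phase((Ax_0)_i)}\bigr)=\Im(\alpha)\,|Ax_0|_i$, which does not vanish. The $Ax_0$ component is not killed, only small: its norm is $|\Im\alpha|\,\|Ax_0\|$, which your own bound on $|\alpha|$ controls by $O(\sqrt{n/m})\,\|Aw_\perp\|$; you must carry this as an error term, exactly as the paper carries its $0.04\,\|w\|$ correction, and absorb it into the slack below $4/5$. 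Second, your constants do not close with a $1/4$-net. The entire proof lives on the multiplicative margin between $\sqrt{1/2}\approx 0.707$ and $4/5$, about a factor $1.13$; a $1/4$-net costs a factor $\frac{1}{1-1/4}=\frac{4}{3}$ in the operator-norm bound (or, in your quadratic-form formulation, an additive error of order $\frac{1}{4}\cdot O(m)$, which swamps the gap $(16/25-1/2)\,m=0.14\,m$). You need a $\delta$-net with $\delta$ a sufficiently small absolute constant, of order $10^{-2}$; its cardinality is still $e^{O(n)}$, so the union bound survives for $M$ large, but the parameter $\delta=1/4$ as written would make the argument fail.
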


\begin{proof}
If we multiply $x_0$ by a positive real number, we can assume $||x_0||=1$. Moreover, as the law of $A$ is invariant under right multiplication by a unitary matrix, we can assume that
\begin{equation*}
x_0=\left(\begin{smallmatrix}1\\0\\\vdots\\0\end{smallmatrix}\right).
\end{equation*}
Then, if we write $A_1$ the first column of $A$, and $A_{2:n}$ the submatrix of $A$ obtained by removing this first column,
\begin{align}
&\Range(A)\cap\{Ax_0\}^\perp\nonumber\\
=&\left\{w-\frac{\scal{w}{A_1}}{||A_1||^2}A_1,
w\in\Range(A_{2:n})
\right\}.\label{eq:range_perp}
\end{align}
We first observe that
\begin{equation*}
\sup_{w\in\Range(A_{2:n})-\{0\}}\frac{|\scal{w}{A_1}|}{||w||}
\end{equation*}
is the norm of the orthogonal projection of $A_1$ onto $\Range(A_{2:n})$. The $(n-1)$-dimensional subspace $\Range(A_{2:n})$ has a rotationally invariant distribution in $\C^m$, and is independent of $A_1$. Thus, from Lemma \ref{lem:dasgupta} coming from \citep{dasgupta}, for any $t>1$,
\begin{equation*}
\sup_{w\in\Range(A_{2:n})-\{0\}}\frac{|\scal{w}{A_1}|}{||w||\,||A_1||}< \sqrt{\frac{t(n-1)}{m}},
\end{equation*}
with probability at least
\begin{equation*}
1-\exp\left(-(n-1)(t-1-\ln t)\right).
\end{equation*}
We take $t=\frac{m}{n-1}(0.04)^2$ (which is larger than $1$ when $m\geq Mn$ with $M>0$ large enough), and it implies that
\begin{equation}\label{eq:second_005}
\sup_{w\in\Range(A_{2:n})-\{0\}}\frac{|\scal{w}{A_1}|}{||w||\,||A_1||}< 0.04
\end{equation}
with probability at least
\begin{equation*}
1-\exp(-c_2m)
\end{equation*}
for some constant $c_2>0$, provided that $m\geq Mn$ with $M$ large enough.

Second, as $A_{2:n}$ is a random matrix of size $m\times(n-1)$, whose entries are independent and distributed according to the law $\mathcal{N}(0,1/2)+\mathcal{N}(0,1/2)i$, we deduce from Proposition \ref{prop:davidson} applied with $t=0.01$ that, with probability at least
\begin{equation*}
1-2\exp\left(-10^{-4}m\right),
\end{equation*}
we have, for any $x\in\C^{n-1}$,
\begin{align}
||A_{2:n}x|| &\geq \sqrt{m}\left(1-\sqrt{\frac{(n-1)}{m}}-0.01\right)||x||\nonumber\\
&\geq 0.98\sqrt{m}||x||,\label{eq:norm_C}
\end{align}
provided that $m\geq 10000n$.

We now set
\begin{equation*}
C = \mathrm{Diag}(\overline{\phase(A_1)})A_{2:n}.
\end{equation*}
The matrix $\left(\begin{matrix}\Im C&\Re C\end{matrix}\right)$ has size $m\times(2(n-1))$; its entries are independent and distributed according to the law $\mathcal{N}(0,1/2)$. So by \citep[Thm II.13]{davidson} (applied with $t=0.01$), with probability at least
\begin{equation*}
1-\exp(-5.10^{-5}m),
\end{equation*}
we have, for any $x\in\R^{2(n-1)}$,
\begin{align}
\left|\left|\left(\begin{matrix}\Im C&\Re C\end{matrix}\right)x
\right|\right|&\leq \sqrt{\frac{m}{2}}\left(1+\sqrt\frac{2(n-1)}{m}+0.01\right)||x||\nonumber\\
&\leq 1.02\sqrt\frac{m}{2}||x||,\label{eq:norm_ImReC}
\end{align}
provided that $m\geq 20000n$.

When Equations \eqref{eq:norm_C} and \eqref{eq:norm_ImReC} are simultaneously valid, any $w=A_{2:n}w'$ belonging to $\Range(A_{2:n})$ satisfies:
\begin{align}
\left|\left|\Im(w\odot\overline{\phase(Ax_0)})\right|\right|
&=\left|\left|\Im(Cw')\right|\right|\nonumber\\
&=\left|\left|\begin{pmatrix}\Im C&\Re C\end{pmatrix}\begin{pmatrix}
\Re w'\\\Im w'\end{pmatrix}
 \right|\right|\nonumber\\
&\leq 1.02\sqrt\frac{m}{2}\left|\left|\begin{pmatrix}
\Re w'\\\Im w'\end{pmatrix}
 \right|\right|\nonumber\\
&=1.02\sqrt\frac{m}{2}||w'||\nonumber\\
&\leq \frac{1.02}{0.98\sqrt{2}}||A_{2:n}w'||\nonumber\\
&= \frac{1.02}{0.98\sqrt{2}}||w||\nonumber\\
&\leq 0.75 ||w||.\label{eq:norms_combined}
\end{align}

We now conclude. Equations \eqref{eq:second_005}, \eqref{eq:norm_C} and \eqref{eq:norm_ImReC} hold simultaneously with probability at least
\begin{equation*}
1-C_1\exp(-C_2 m)
\end{equation*}
for any $C_1$ large enough and $C_2$ small enough, provided that $m\geq Mn$ with $M$ large enough. Let us show that, on this event, Equation \eqref{eq:second_term} also holds. Any $v\in\Range(A)\cap\{Ax_0\}^\perp$, from Equality \eqref{eq:range_perp}, can be written as
\begin{equation*}
v=w-\frac{\scal{w}{A_1}}{||A_1||^2}A_1,
\end{equation*}
for some $w\in\Range(A_{2:n})$. Using Equation \eqref{eq:second_005}, then Equation \eqref{eq:norms_combined}, we get:
\begin{align*}
\left|\left|\Im(v\odot\overline{\phase(Ax_0)})\right|\right|
&\leq\left|\left|\Im(w\odot\overline{\phase(Ax_0)})\right|\right|\\
&\hskip 1.5cm +\left|\left|\frac{\scal{w}{A_1}}{||A_1||^2}A_1\right|\right|\\
&\leq\left|\left|\Im(w\odot\overline{\phase(Ax_0)})\right|\right|\\
&\hskip 1.5cm +0.04 ||w||\\
&\leq 0.79 ||w||.
\end{align*}
But then, by Equation \eqref{eq:second_005} again,
\begin{equation*}
||v||^2=||w||^2-\frac{\scal{w}{A_1}^2}{||A_1||^2}\geq (1-(0.04)^2)||w||^2.
\end{equation*}
So
\begin{align*}
\left|\left|\Im(v\odot\overline{\phase(Ax_0)})\right|\right|
&\leq 0.79 ||w||\\
&\leq \frac{0.79}{\sqrt{1-(0.04)^2}}||v||\\
&\leq \frac{4}{5}||v||.
\end{align*}

\end{proof}

\bibliographystyle{plainnat}
\bibliography{../../bib_articles.bib,../../bib_proceedings.bib,../../bib_livres.bib,../../bib_misc.bib}

\begin{thebibliography}{27}
\providecommand{\natexlab}[1]{#1}
\providecommand{\url}[1]{\texttt{#1}}
\expandafter\ifx\csname urlstyle\endcsname\relax
  \providecommand{\doi}[1]{doi: #1}\else
  \providecommand{\doi}{doi: \begingroup \urlstyle{rm}\Url}\fi

\bibitem[Balan et~al.(2006)Balan, Casazza, and Edidin]{balan}
R.~Balan, P.~Casazza, and D.~Edidin.
\newblock On signal reconstruction without noisy phase.
\newblock \emph{Applied and Computational Harmonic Analysis}, 20:\penalty0
  345--356, 2006.

\bibitem[Cand\`es and Li(2014)]{candes_li}
E.~J. Cand\`es and X.~Li.
\newblock Solving quadratic equations via phaselift when there are about as
  many equations as unknowns.
\newblock \emph{Foundations of Computational Mathematics}, 14\penalty0
  (5):\penalty0 1017--1026, 2014.

\bibitem[Cand\`es et~al.(2013)Cand\`es, Strohmer, and Voroninski]{candes2}
E.~J. Cand\`es, T.~Strohmer, and V.~Voroninski.
\newblock Phaselift: exact and stable signal recovery from magnitude
  measurements via convex programming.
\newblock \emph{Communications in Pure and Applied Mathematics}, 66\penalty0
  (8):\penalty0 1241--1274, 2013.

\bibitem[Cand\`es et~al.(2015)Cand\`es, Li, and Soltanolkotabi]{candes_li2}
E.~J. Cand\`es, X.~Li, and M.~Soltanolkotabi.
\newblock Phase retrieval from coded diffraction patterns.
\newblock \emph{Applied and Computational Harmonic Analysis}, 39\penalty0
  (2):\penalty0 277--299, 2015.

\bibitem[Candès et~al.(2015)Candès, Li, and Soltanolkotabi]{candes_wirtinger}
E.~J. Candès, X.~Li, and M.~Soltanolkotabi.
\newblock Phase retrieval via wirtinger flow: theory and algorithms.
\newblock \emph{IEEE Transactions of Information Theory}, 61\penalty0
  (4):\penalty0 1985--2007, 2015.

\bibitem[Chai et~al.(2011)Chai, Moscoso, and Papanicolaou]{chai}
A.~Chai, M.~Moscoso, and G.~Papanicolaou.
\newblock Array imaging using intensity-only measurements.
\newblock \emph{Inverse Problems}, 27\penalty0 (1), 2011.

\bibitem[Chen et~al.(2016)Chen, Fannjiang, and Liu]{chen_fannjiang}
P.~Chen, A.~Fannjiang, and G.-R. Liu.
\newblock Phase retrieval with one or two diffraction patterns by alternating
  projection with null initialization.
\newblock \emph{preprint}, 2016.
\newblock http://arxiv.org/abs/1510.07379.

\bibitem[Chen and Candès(2015)]{candes_wirtinger2}
Y.~Chen and E.~J. Candès.
\newblock Solving random quadratic systems of equations is nearly as easy as
  solving linear systems.
\newblock \emph{To appear in Communications on Pure and Applied Mathematics},
  2015.

\bibitem[Conca et~al.(2015)Conca, Edidin, Hering, and Vinzant]{conca}
A.~Conca, D.~Edidin, M.~Hering, and C.~Vinzant.
\newblock Algebraic characterization of injectivity in phase retrieval.
\newblock \emph{Applied and Computational Harmonic Analysis}, 32\penalty0
  (2):\penalty0 346--356, 2015.

\bibitem[Dasgupta and Gupta(2003)]{dasgupta}
S.~Dasgupta and A.~Gupta.
\newblock An elementary proof of a theorem of {J}ohnson and {L}indenstrauss.
\newblock \emph{Random Structures and Algorithms}, 22\penalty0 (1):\penalty0
  60--65, 2003.

\bibitem[Davidson and Szarek(2001)]{davidson}
K.~R. Davidson and S.~J. Szarek.
\newblock Local operator theory, random matrices and {B}anach spaces.
\newblock In W.~B. Johnson and J.~Lindenstrauss, editors, \emph{Handbook of the
  geometry of Banach spaces, volume 1}, pages 317--366. Elsevier, 2001.

\bibitem[Fienup(1982)]{fienup}
J.~R. Fienup.
\newblock Phase retrieval algorithms: a comparison.
\newblock \emph{Applied Optics}, 21\penalty0 (15):\penalty0 2758--2769, 1982.

\bibitem[Gao and Xu(2016)]{gao_xu}
B.~Gao and Z.~Xu.
\newblock Gauss-{N}ewton method for phase retrieval.
\newblock \emph{preprint}, 2016.
\newblock http://arxiv.org/abs/1606.08135.

\bibitem[Gerchberg and Saxton(1972)]{gerchberg}
R.~Gerchberg and W.~Saxton.
\newblock A practical algorithm for the determination of phase from image and
  diffraction plane pictures.
\newblock \emph{Optik}, 35:\penalty0 237--246, 1972.

\bibitem[Gross et~al.(2015)Gross, Krahmer, and Kueng]{gross}
D.~Gross, F.~Krahmer, and R.~Kueng.
\newblock A partial derandomization of {P}hase{L}ift using spherical designs.
\newblock \emph{Journal of Fourier Analysis and Applications}, 21\penalty0
  (2):\penalty0 229--266, 2015.

\bibitem[Kolte and Özgür(2016)]{kolte}
R.~Kolte and A.~Özgür.
\newblock Phase retrieval via incremental truncated {W}irtinger flow.
\newblock \emph{preprint}, 2016.
\newblock arxiv.org/abs/1606.03196.

\bibitem[Netrapalli et~al.(2013)Netrapalli, Jain, and Sanghavi]{netrapalli}
P.~Netrapalli, P.~Jain, and S.~Sanghavi.
\newblock Phase retrieval using alternating minimization.
\newblock In \emph{Advances in Neural Information Processing Systems 26}, pages
  1796--2804, 2013.

\bibitem[Noll and Rondepierre(2016)]{noll}
D.~Noll and A.~Rondepierre.
\newblock On local convergence of the method of alternating projections.
\newblock \emph{Foundations of Computational Mathematics}, 16\penalty0
  (2):\penalty0 425--455, 2016.

\bibitem[Schechtman et~al.(2015)Schechtman, Eldar, Cohen, Chapman, Miao, and
  Segev]{schechtman}
Y.~Schechtman, Y.~C. Eldar, O.~Cohen, H.~N. Chapman, J.~Miao, and M.~Segev.
\newblock Phase retrieval with application to optical imaging: a contemporary
  overview.
\newblock \emph{IEEE Signal processing magazine}, 32\penalty0 (3):\penalty0
  87--109, 2015.

\bibitem[Soltanolkotabi(2014)]{soltanolkotabi}
M.~Soltanolkotabi.
\newblock \emph{Algorithms and theory for clustering and nonconvex quadratic
  programming}.
\newblock PhD thesis, Stanford University, 2014.

\bibitem[Sun and Smith(2012)]{sun_smith}
D.~L. Sun and J.~O. Smith.
\newblock Estimating a signal from a magnitude spectrogram via convex
  optimization.
\newblock In \emph{Audio Engineering Society 133rd Convention}, 2012.

\bibitem[Sun et~al.(2016)Sun, Qu, and Wright]{sun_qu_wright}
J.~Sun, Q.~Qu, and J.~Wright.
\newblock A geometric analysis of phase retrieval.
\newblock \emph{preprint}, 2016.
\newblock http://arxiv.org/abs/1602.06664.

\bibitem[Waldspurger(2016)]{gerchberg_saxton_long}
I.~Waldspurger.
\newblock Phase retrieval with random gaussian sensing vectors by alternating
  projections.
\newblock \emph{preprint}, 2016.
\newblock https://arxiv.org/abs/1609.03088.

\bibitem[Waldspurger et~al.(2015)Waldspurger, d'Aspremont, and Mallat]{maxcut}
I.~Waldspurger, A.~d'Aspremont, and S.~Mallat.
\newblock Phase recovery, maxcut and complex semidefinite programming.
\newblock \emph{Mathematical Programming}, 149\penalty0 (1-2):\penalty0 47--81,
  2015.

\bibitem[Wang et~al.(2016)Wang, Giannakis, and Eldar]{wang}
G.~Wang, G.~B. Giannakis, and Y.~C. Eldar.
\newblock Solving random systems of quadratic equations via truncated
  generalized gradient flow.
\newblock In \emph{Advances in Neural Information Processing Systems 29}, 2016.

\bibitem[White et~al.(2015)White, Sanghavi, and Ward]{white}
C.~D. White, S.~Sanghavi, and R.~Ward.
\newblock The local convexity of solving systems of quadratic equations.
\newblock \emph{preprint}, 2015.
\newblock http://arxiv.org/abs/1506.07868.

\bibitem[Zhang and Liang(2016)]{zhang}
H.~Zhang and Y.~Liang.
\newblock Reshaped {W}irtinger flow for solving quadratic systems of equations.
\newblock In \emph{Advances in Neural Information Processing Systems 29}, 2016.

\end{thebibliography}

\end{document}